\newcommand{\ps}[2]{\left<#1,#2\right>}
\newcommand{\norm}[1]{\left\lVert#1\right\rVert}
\newcommand{\nor}[1]{\left\lvert#1\right\rvert}
\newtheorem{Theorem}{Theorem}
\numberwithin{Theorem}{section}
\newtheorem{Proposition}[Theorem]{Proposition}
\newtheorem{Lemma}[Theorem]{Lemma}
\newtheorem{Corollary}[Theorem]{Corollary}
\theoremstyle{definition}
\newtheorem{Definition}[Theorem]{Definition}
\theoremstyle{remark}
\newtheorem{Remark}[Theorem]{Remark}
\numberwithin{equation}{section}
 \DeclareMathOperator{\R}{\mathbb{R}}
\DeclareMathOperator*{\spec}{spec}
\DeclareMathOperator*{\tr}{tr}
\newcommand*{\Mp}{\mathcal{M}_{K}^{+}}
\newcommand{\Mm}{\mathcal{M}_{K}^{-}}
\newcommand{\p}{\partial}
\newcommand{\h}{\mathcal{H}}
\newcommand{\osc}{\operatorname{osc}}
\begin{document}
\title{Free boundary regularity for the inhomogeneous one-phase Stefan problem} 

\author{Fausto Ferrari
  \and
  Nicol\`o Forcillo
  \and
  Davide Giovagnoli
  \and 
  David Jesus
} 

\author{Fausto Ferrari
\and 
Nicol\`o Forcillo
\and
Davide Giovagnoli
\and
David Jesus
}
\newcommand{\Addresses}{{
  \bigskip
  \footnotesize

  Fausto Ferrari, 
   \textit{E-mail address:} {\tt fausto.ferrari@unibo.it}

  \vspace{0.1cm}
 Davide Giovagnoli,
    \textit{E-mail address:} {\tt d.giovagnoli@unibo.it}
  
   \vspace{0.1cm}
  \noindent\textsc{Dipartimento di Matematica,
Universit\`a di Bologna\\ 
\noindent Piazza di Porta San Donato 5, 40126 Bologna, Italy}\\

  \medskip
  Nicol\'o Forcillo,  \textit{E-mail address:} {\tt forcill1@msu.edu} 
  \vspace{0.1cm}
  \par\nopagebreak \noindent \textsc{Departments of Mathematics, Michigan State University, 619 Red Cedar Road  \\ \noindent East Lansing, MI 48824, USA}\\

  \medskip


 David Jesus,  \textit{E-mail address:} {\tt david.dejesus@kaust.edu.sa} 
 
 \vspace{0.1cm}
 \noindent \textsc{Applied Mathematics and Computational Sciences (AMCS), \\ King
Abdullah University  of Science and Technology (KAUST), \\  Thuwal 23955-6900, Kingdom of Saudi Arabia}
  
   }}

\date{\today}

\maketitle
{\bf Abstract.} In this paper, we prove that flat free boundaries of  solutions of the inhomogeneous one-phase Stefan problem are $C^{1,\alpha}$. The method consists of employing a hodograph transform and deriving the regularity via a linearization technique, following the approach introduced by De Silva, Forcillo, and Savin in \cite{DFS23}.
\thispagestyle{fancy}
\fancyhead{} 
\fancyfoot{}
\fancyfoot[L]{\footnoterule {\small 
Keywords: Inhomogeneous Stefan problem, free boundary problems, flat free boundaries, parabolic operators, viscosity solutions. \\ MSC: 35R35, 35K55, 80A22}
}
 
\section{Introduction}
The goal of this paper is to introduce new results concerning the regularity of flat free boundaries  in a one-phase Stefan problem governed by an inhomogeneous equation. More precisely, we deal with nonnegative viscosity solutions $u: \Omega \times (0,T] \mapsto \mathbb{R}$ of the free boundary problem,
    \begin{equation}\label{eq:StefanNH}
        \begin{cases}
          \partial_{t} u - \Delta u  =  f& \text{in} \,\, \Omega^+(u):=\left(\Omega \times (0,T]\right) \cap \{ u > 0  \},\\
       \partial_{t} u = \nor{\nabla u}^2& \text{on} \,\, \mathcal{F}(u):=\left(\Omega \times (0,T] \right) \cap \partial\{ u > 0  \},
    \end{cases}
    \end{equation}
   where $\Omega$ is a bounded domain in $\mathbb{R}^{n}$, $T > 0$,  $f: \Omega \times [0,T] \mapsto \mathbb{R}, \, f \in C(\Omega \times [0,T]) \cap L^{\infty }(\Omega \times [0,T])$. \(\mathcal{F}(u)\) is the so-called \textit{free boundary} of the solution.

    The Stefan problem consistently attracts attention since it models fundamental physical phenomena, especially the processes of ice melting or liquid water freezing. This topic plays an important role within the context of free boundary problems.
In literature, the Stefan problem has been treated in several ways, so, we refer the interested reader to \cite{vis,Friedman,Friedmanerr,CS} for a detailed discussion on the various formulations and results. 

We are interested in the free boundary regularity of viscosity solutions of the problem, as in \cite{ACS,ACS2,ACS3,DFS23}. This topic had been treated in the trilogy \cite{ACS,ACS2, ACS3}, which collects important contributions to the free boundary regularity in the homogeneous Stefan problem. In these papers, the authors obtained sharp results in the two-phase setting. 
Further achievements regarding the existence of analytic solutions can be found in \cite{PrSaSi}. We refer to \cite{ChKi} for the relationship between the solutions' smoothness and the initial data regularity. In addition, \cite{KiPo} focused on the existence of a solution, while, in \cite{FeSa_p}, operators with variable coefficients have been studied, exploiting the same techniques of \cite{ACS,ACS2, ACS3}. 
 
Recently, in \cite{DFS23}, a new approach for investigating the free boundary regularity has been introduced. Specifically, the authors addressed the regularity for the homogeneous one-phase Stefan problem by employing the {\it hodograph transform} jointly with the viscosity theory, constructing a notion of  flat free boundary within an evolving framework. This approach follows the idea introduced in \cite{DeSilva2011} in the elliptic scenario, and then widely applied in different cases, see, for instance, \cite{DFS1,DeFS2,DeFS3} and  \cite{FL,FL2,FL3,FL4} for recent results in the elliptic degenerate nonlinear setting, see also \cite{FLS,FJL} for comprehensive surveys. 

In the parabolic setting, fully nonlinear operators in the homogeneous framework have been recently investigated in \cite{wang2024free}. We also mention \cite{KS}, where a parabolic homogeneous two-phase problem with a transmission-type free boundary condition has been considered. In that work, the free boundary condition involves only spatial derivatives, in contrast to the one appearing in the Stefan problem.
    
    In our paper, we deal with the inhomogeneous one-phase case \eqref{eq:StefanNH}, which models the melting of the ice in the presence of an external source or a sink of heat, following the approach introduced in \cite{DFS23}. 
More precisely, we are interested in the regularity of the free boundary starting from a geometric condition, which we call $\varepsilon$-flatness and introduce in the next definition.

\begin{Definition}\label{def:flat_sol}
 For $K>1$, $0<\lambda \leq 1$, and $\varepsilon>0$, we say  that $u$ is $\varepsilon$-flat in $B_{2\lambda} \times (-2K^{-1}\lambda,0]$ if there exist two smooth functions $a_n,b:\R \to \R$ such that
 \begin{equation} \label{Flatness}
   a_n(t) \, \left (x_n - b(t)-\varepsilon \lambda \right)^+  \le u(x,t)  \le a_n(t) \, \left(x_n - b(t)+\varepsilon \lambda \right)^+,\quad (\varepsilon\text{-flatness})
 \end{equation}
  with
 \begin{equation} \label{hp:flattt}
 	c(1+\|f^- \|_\infty) \le  a_n \le C, \quad \quad |a_n'(t)| \le \lambda^{-2}, \quad \quad b'(t)=- a_n(t),\quad\quad  \|f\|_{L^\infty}\leq \varepsilon^2,
 \end{equation}
 where $c,\, C>0$ depend only on $K$ and $n$. Here, $a_n'$ and $b'$ denote the derivatives of $a_n$ and $b$, respectively.
 \end{Definition}
 
 We remark that the assumption $b'(t)=-a_n(t)$ in Definition \ref{def:flat_sol} implies that the approximating function $a_n(t)(x_n-b(t))^+$ satisfies the free boundary condition. Moreover, we emphasize that Definition \ref{def:flat_sol} differs from the more common notion of $\varepsilon_0$-flatness of the free boundary, which we recall below.
    \begin{Definition}\label{Def:flat_FB}
    Let $\partial_x\{u(\cdot,t)>0\}$ denote the spatial boundary in $\R^n$ of $\{u(\cdot,t)>0\}$. We say that $\partial_x\{u>0\}$ is $\varepsilon_0$-flat in $ B_\lambda$ if, for every time $t$, there exists a direction $\nu$ such that
    \[
    \partial_x\{u(\cdot, t)>0\}\cap  B_\lambda\subset \{|\ps{ x}{\nu}|\leq \varepsilon_0\lambda\}
    \]
    and
    \begin{align*}
        u=0 \quad &\textit{ in } \quad \{\ps{ x}{\nu} < -\varepsilon_0\lambda\},\\
        u>0 \quad &\textit{ in } \quad \{\ps{ x}{\nu} > \varepsilon_0\lambda\}.
    \end{align*}
\end{Definition}
We observe that a key difference from the elliptic framework is the definition of flatness of the free boundary, due to its evolution in time.

When $f \equiv 0$, Definition \ref{def:flat_sol} reduces to the assumption of Theorem 1.2 in \cite{DFS23}. For the sake of clarity, in the sequel,  we always refer to Definition \ref{def:flat_sol} when we mention the $\varepsilon$-flatness of $u$.

  In \cite[Lemma 2.1]{DFS23}, the authors show that the geometric notion of flatness  for the free boundary, stated in Definition \ref{Def:flat_FB},  combined with a nondegeneracy condition, implies the $\varepsilon$-flatness of the solution near the free boundary.
In our inhomogeneous case,  the proof of this fact is provided in \cite[Theorem 1.1]{FGJ}.

     The main result of this paper is the following.

\begin{Theorem}\label{Theorem:flatsolution}
    Let $K>1$. There exists  $0 <\lambda \leq 1$ such that if $u$ is a viscosity solution of the one-phase Stefan problem \eqref{eq:StefanNH} in $B_{2\lambda} \times (-2K^{-1}\lambda,0]$, $0\in \mathcal{F}(u),$   and $u$ is $\varepsilon$-flat in the sense of Definition \ref{def:flat_sol} for some small $\varepsilon$ only depending on $K$ and $n$, then the free boundary $\mathcal{F}(u)$ is a $C^{1,\alpha}$-graph in the $x_n$-direction in $B_\lambda \times[-K^{-1}\lambda ,0]$. 
\end{Theorem}
 In other words, we prove the $C^{1,\alpha}$-regularity of flat free boundaries for solutions to \eqref{eq:StefanNH}.

 We  point out that it is enough to prove Theorem \ref{Theorem:flatsolution} under the  assumptions
\begin{equation} \label{eq:relaxed_hp}
\lambda \leq \lambda_0 \quad \text{and} \quad |a_n'(t)| \le c_0\lambda^{-2},
\end{equation}
where $\lambda_0$ and $c_0$ are sufficiently small constants depending on $K$ and $n$. This reduction can be achieved by considering the problem in a domain of size $\tau \lambda$ for $\tau$ small enough, and by relabeling $\tau \lambda$ and $\varepsilon \tau^{-1}$ as $\lambda$ and $\varepsilon$, respectively. 


The proof of Theorem \ref{Theorem:flatsolution} follows the approach in \cite{DFS23}.   After applying the hodograph transform to solutions of \eqref{eq:StefanNH}, the problem formally reduces to studying viscosity solutions of
\begin{equation}\label{eq:QuasiStefanNH}
        \begin{cases}
        \partial_{t} u - \tr( A(\nabla u) D^2 u) + (\partial_{x_n}u) \,  f(x',u,t)=0  &\text{in} \,\, \{ x_n > 0 \},\\
       \partial_{t} u = g(\nabla  u) &\text{on} \,\,  \{ x_n = 0 \},
    \end{cases}
\end{equation} 
where $A$ and $g$ are defined in \eqref{eq:HodographStefan} and \eqref{eq:g}.

 The hodograph transform plays an important role in our analysis. Since it transforms the free boundary into a flat plane, it removes the main  challenge in the study of free boundary problems. However, this difficulty is transferred into the solution itself since, in a small neighborhood of this flat interface, it might be multi-valued, thus failing to be a well-defined function. To address this subtle and important issue, we need to deal with the very weak notion of multi-valued viscosity  solution, made precise in Definition \ref{def:multivalue_sol}. Therefore, the problem of studying the free boundary regularity of \eqref{eq:StefanNH} turns out to follow from the regularity up to the \textit{fixed} boundary of solutions to \eqref{eq:QuasiStefanNH}. The proof of this last property relies on an improvement of flatness result, see Proposition \ref{Prop:ImprovFlat}.

The hodograph transform introduces some extra difficulties in a small neighborhood of the free boundary, as already faced in \cite{DFS23}. The problem \eqref{eq:QuasiStefanNH} is different from the one in \cite{DFS23} due to the presence of  the gradient term $(\partial_{x_n}u) \,  f(x',u,t)$.
This fact introduces a further challenge in recovering the Harnack inequality, as in the homogeneous case it follows from the usual parabolic Harnack inequality. Specifically, we apply a more sophisticated Harnack inequality for the following extremal equations
\begin{align*}
    \partial_t u-\mathcal{M}^\pm(D^2u)\mp\Lambda|\nabla u|+f=0,
\end{align*}
which can be found in \cite{koike2019weak}.

Even though the equation in \eqref{eq:QuasiStefanNH} does not fit the homogeneous framework, we prove that the achieved limiting profile is the same as the one obtained in the homogeneous case.
Consequently, we can exploit the Schauder-type estimates, obtained in \cite{DFS23}, with respect to an appropriate distance, which encodes features of both the parabolic and hyperbolic rescaling of the problem. The improved regularity for the linearized problem is then transferred to \eqref{eq:QuasiStefanNH} by a comparison principle. 

Let us mention that the argument developed in this paper can be combined with the one introduced in \cite{wang2024free} to consider a fully nonlinear PDE in \eqref{eq:StefanNH} governing the Stefan problem.

	Now, we describe the structure of the paper. In Section \ref{Section:preliminaries}, we provide the necessary notation, definitions, and further remarks on the lack of scaling of the problem. In Section \ref{Section:hodograph}, we apply the hodograph transform to change equation \eqref{eq:StefanNH} into \eqref{eq:QuasiStefanNH} and discuss how the initial assumptions translate into this new context. We end this section by stating the main improvement of flatness result. We devote Section \ref{Section:propertiesv} to study properties of the error function, which is defined as $u-\ell,$ where $\ell$ is the affine approximation of $u$, for each time $t$. In Section \ref{Section:C1a}, the proofs of the improvement of flatness and Theorem \ref{Theorem:flatsolution} are presented.



    \section{Preliminaries}\label{Section:preliminaries}
    \subsection{Notation and definitions} \label{Subsection:notation}
In this subsection, we introduce our main notation. As usual, given $(x,t)\in \R^{n+1}$, we call $x\in \mathbb{R}^n$ the space variable and $t \in \R$  the time variable. A point $x\in \mathbb{R}^n$ will sometimes be written as $x=(x',x_n)$, where $x'\in \R^{n-1}$ and $x_n \in \R$.
For a function $u(x,t)$, we denote by $\nabla u$ the gradient of $u$ with respect to $x$, $D^2 u$ the Hessian with respect to $x$, and $\nabla_{x'} u$ the gradient with respect to $x'$. 
A \textit{parabolic cylinder} $\mathcal{P}$ is any set of the form
 $$\mathcal{P}=U \times (t_1,t_2],$$ 
 where $U$ is a smooth bounded open domain in $\R^n$ and $t_1 < t_2$. We denote by $\partial_p \mathcal{P}$ the \textit{parabolic boundary} of $\mathcal{P}$, i.e., $\partial_p {\mathcal{P}}=\left( U\times \{t_1\}  \right)  \cup   \left( \partial U \times [t_1,t_2] \right) $.
 
Given $(x,t), (y,s)\in \R^{n+1}$, we define the \textit{parabolic distance} as
	\begin{equation}\label{eq:parab-dist}
	d_p((x,t), (y,s)) :=\big(|x-y|^2 + |t-s|\big)^{1/2}.
	\end{equation}
Let $C^{0}(\mathcal{P})$ be the set of continuous functions in ${\mathcal{P}}$. For $\alpha \in (0,1]$ and $u\in C^{0}(\mathcal{P})$, we define the parabolic H\"older semi-norm as
\begin{equation*}
    [u]_{C^{\alpha}(\mathcal{P})} := \sup_{\substack{(x,t)\neq (y,s)}} \frac{\nor{u(x,t)-u(y,s)}}{d_p((x,t),(y,s))^{\alpha}},
\end{equation*}
and the parabolic H\"older norm as
\begin{equation*}
    \norm{u}_{C^{\alpha}(\mathcal{P})} := \norm{u}_{L^{\infty}(\mathcal{P})} +[u]_{C^{\alpha}({\mathcal{P}})}.
\end{equation*}
 Furthermore, we define the H\"older semi-norm in time to be
\begin{equation*}
[u]_{C_t^{\alpha}({\mathcal{P}})} := \sup_{ \substack{(x,t_1)\neq (x,t_2)}} \frac{|u(x,t_1)-u(x,t_2)|}{|t_1-t_2|^\alpha}.
\end{equation*}
This is necessary to introduce the $C^{1,\alpha}$-norm in ${\mathcal{P}}$
\begin{equation*}
    \norm{u}_{C^{1,\alpha}({\mathcal{P}})}:=\norm{u}_{L^{\infty}({\mathcal{P}})} + \norm{\nabla u}_{L^{\infty}({\mathcal{P}})} +[\nabla u]_{C^{\alpha}({\mathcal{P}})} + [u]_{C_t^{(1+\alpha)/2}({\mathcal{P}})}.
\end{equation*}
We denote by $C^{\alpha}({\mathcal{P}})$ (resp. $C^{1,\alpha}({\mathcal{P}})$) the space of continuous (resp. differentiable in space) functions in $\mathcal{P}$, which are bounded in the $\norm{\cdot}_{C^{\alpha}({\mathcal{P}})} $ (resp. $\norm{\cdot}_{C^{1,\alpha}({\mathcal{P}})}$) norm.

 In the following, for simplicity, we consider parabolic cylinders of the form
\begin{equation}\label{eq:parab-cylin}
	\mathcal{P}_r(x_0,t_0):=Q_{r}(x_0) \times (t_0-r^2,t_0],
\end{equation}
where $n \geq 2$, given $r > 0$, we set
$$Q_{r} := (-r,r)^{n}, \quad Q_{r}^{+}:=  Q_{r} \cap \{ x_n \geq 0 \},  \quad  Q_{r}(x_0):=  x_0 + Q_{r}.$$
In addition, we denote by
\begin{equation}\label{eq:C_r-F_r}
    \begin{aligned}
        &\mathcal{C}_r := (Q_r \cap \{x_n >0 \}) \times (-r,0], \\
        &\mathcal{F}_r:=\{(x,t) | \,  x \in Q_r \cap \{x_n = 0 \}, \, t \in (-r,0] \}.
    \end{aligned}
\end{equation}
It will be useful to define the \textit{Dirichlet boundary} of $\mathcal{C}_1 $ as
\begin{equation*}
    \p_D \mathcal{C}_1:= \partial \mathcal{C}_1 \cap \left( \{ t=-1\} \cup \{ x_n=1\} \cup_{i=1}^{n-1} \{ \nor{x_i}=1\} \right),
\end{equation*}
which, together with $\mathcal{F}_1,$ represents the parabolic boundary of $\mathcal{C}_1$. \\
Lastly, for $K>1$, we recall the definition of extremal Pucci operators with ellipticity constants $K^{-1}$ and $K$  
\begin{equation*}
	\mathcal M_K^+ (N) = \max_{K^{-1} I \le A \le K I} \quad \tr( A N), \quad \quad  \quad \mathcal M_K^- (N) = \min_{K^{-1} I \le A \le K I} \quad \tr( A N).
\end{equation*}

We now provide  the definition of viscosity solution of \eqref{eq:StefanNH}.
    \begin{Definition}\label{Def:viscosity_stefan}
     We say that  a continuous function $u$ is a viscosity solution of \eqref{eq:StefanNH} in $\Omega \times (0,T]$ if it is nonnegative and its graph cannot be touched by above (resp. below) at a point $(x_0,t_0) \in ,$ in a parabolic cylinder $ \mathcal{P}_r(x_0,t_0) \subset \Omega \times (0,T],$
      by the graph of a classical strict supersolution $\varphi^+$ (resp. subsolution). 
      By a classical strict supersolution in $\mathcal{P}$, we mean that $\varphi(x,t) \in C^{2}( \mathcal{P}_r(x_0,t_0))$, $\nabla \varphi \neq 0,$ and it solves
    \begin{equation*}
        \begin{cases}
       \partial_{t} \varphi -\Delta \varphi > f& \text{in} \,\,  \mathcal{P}_r(x_0,t_0)\cap \{ \varphi > 0  \},\\
       \partial_{t} \varphi > \nor{\nabla \varphi}^2& \text{on} \,\,  \mathcal{P}_r(x_0,t_0) \cap \partial\{ \varphi > 0  \}.
    \end{cases}
    \end{equation*}
    Similarly, we can define a strict classical subsolution.
    \end{Definition}

We want to relate our inhomogeneous equation to the homogeneous one studied in \cite{DFS23}. Hence, we start by performing a rescaling to ensure that we can assume, without loss of generality, that the source term $f$ is arbitrarily small. Specifically, if $u$ is a viscosity solution of \eqref{eq:StefanNH} in  $\mathcal{P}_1(0,0)$, see \eqref{eq:parab-cylin}, calling
\[
    \Lambda := \frac{\|f\|^\frac{1}{2}_{L^\infty}}{\varepsilon_1},
\]
then the function
\[
\bar u(x,t)=u(\Lambda^{-1}x, \Lambda^{-2} t), \quad (x,t)\in  \mathcal{P}_\lambda(0,0),
\]
is a solution of 
\begin{align*}
    \begin{cases}
          \partial_{t} \bar u - \Delta \bar u  =  \bar f& \text{in} \,\,  \mathcal{P}_\lambda(0,0) \cap \{ \bar u > 0  \},\\
       \partial_{t} \bar u = \nor{\nabla \bar u}^2& \text{on} \,\,  \mathcal{P}_\lambda(0,0) \cap \partial\{ \bar u > 0  \},
    \end{cases}
\end{align*}
with $\|\bar f\|_{L^\infty}\leq \varepsilon_1^2$. For ease of notation, we will refer to this function as $u$.

\subsection{Some remarks about the scalings of the problem}\label{differentrescaling}
We recall that, when dealing with a problem such as \eqref{eq:StefanNH}, a dichotomy often arises in the application of the rescaling argument, due to the differing scaling behaviors of the PDE and the free boundary condition, as already noted in \cite{ACS,ACS2, ACS3}. 
Indeed, the Stefan problem exhibits a natural mixed scaling: parabolic in the interior and hyperbolic near the free boundary. 

If we apply the hyperbolic rescaling, for instance, centered at $(0,0)\in \mathcal{F}(u)$, given by
\begin{equation*}
    (x,t) \mapsto u_{\lambda}^{H}(x,t)\coloneqq \frac{u(\lambda x, \lambda t )}{\lambda}, \quad (x,t) \in B_1 \times (-1,0],
\end{equation*}
we obtain that  $u_{\lambda}^{H}$ solves a Stefan problem with a rescaled invariant free boundary condition, but with a possibly large diffusion coefficient $\lambda^{-1}$ in the equation, that is,
 \begin{equation}\label{eq:StefanHyper}
        \begin{cases}
         \lambda \partial_{t} u_{\lambda}^{H}-\Delta u_{\lambda}^{H}  = \lambda f(\lambda x, \lambda t)& \text{in} \,\, \left(B_1 \times (-1,0]  \right) \cap \{ u_{\lambda}^{H} > 0  \},\\
       \partial_{t} u_{\lambda}^{H} = \nor{\nabla u_{\lambda}^{H}}^2& \text{on} \,\, \left(B_1 \times (-1,0]\right) \cap \partial\{u_{\lambda}^{H} > 0  \}.
    \end{cases}
    \end{equation}
    Note that, letting $\lambda \to 0^+$, a solution of \eqref{eq:StefanHyper} formally solves the Hele-Shaw equation for which information about time regularity is lost. We point out that for this problem, recently, in \cite{kim24}, it has been shown that flat free boundaries are $C^{1,\alpha}$ at each time possibly admitting the presence of a nonnegative source and a drift term. 
    
On the other hand, if we consider the parabolic rescaling,
\begin{equation*}
    (x,t) \mapsto u_{\lambda}^{P}(x,t) \coloneqq \frac{u(\lambda x, \lambda^2 t )}{\lambda}, \quad (x,t) \in B_1 \times (-\lambda^{-1},0],
\end{equation*}
then we get an equation of the same form, but a different free boundary condition depending on the coefficient $\lambda$ appears. More precisely, it holds

 \begin{equation*}
        \begin{cases}
      \partial_{t} u_{\lambda}^{P} - \Delta u_{\lambda}^{P}  = \lambda f(\lambda x, \lambda^2 t)& \text{in} \,\, \left(B_1 \times (-\lambda^{-1},0]  \right) \cap \{ u_{\lambda}^{P} > 0  \},\\
       \partial_{t} u_{\lambda}^{P} = \lambda \nor{\nabla u_{\lambda}^{P}}^2& \text{on} \,\, \left(B_1 \times (-\lambda^{-1},0]\right) \cap \partial\{u_{\lambda}^{P} > 0  \}.
    \end{cases}
    \end{equation*}
    Thus, the limiting problem leads to no motion of the free boundary, and therefore, no regularization is expected in this case.
    These two alternative representations of the problem highlight the main difference with respect to the elliptic situation.



    \section{The nonlinear problem arising from the hodograph transform} \label{Section:hodograph}

In this section, we discuss the application of the hodograph transform introduced in \cite{DFS23} to our problem.
 To avoid confusion, we will use $u_{S}$ to denote a solution of the Stefan problem \eqref{eq:StefanNH} and $u_H$ for its hodograph transform.\\
First, we look at the graph of $u_S$ in $\mathbb{R}^{n+2}$
\begin{equation} \label{graph1}
    \Gamma := \{ (x,x_{n+1},t) | \; x_{n+1}= u_S(x_1, x_2, \dots, x_n,t)  \}
\end{equation}
    as the graph of a possibly multi-valued function $u_H$ with respect to the $x_n$-coordinate
    \begin{equation*} 
    \Gamma := \{ (x,x_{n+1},t) | \; x_n= u_H(x_1, x_2, \dots, x_{n-1}, x_{n+1}, t)  \}.
\end{equation*}
We say that $u_H$ is \textit{graphical} with respect to the direction $e_{n+1}$ since its graph can be expressed by \eqref{graph1} where $u_S$ is a single-valued function.  
If we denote the coordinates $(x_1, x_2, \dots, x_{n-1}, x_{n+1})$ by $(y_1, \dots, y_n) $, then we can have a formal expression of $\nabla u_S$ in terms of $\nabla u_H$ for a point on the graph $\Gamma$.
Precisely, we can rewrite
\begin{equation*}
    x_{n+1}= u_S(x_1, x_2, \dots, x_n,t)= u_S(y', u_H(y,t),t).
\end{equation*}
Hence, differentiation in $x_{n+1}$ gives
\begin{equation*}
    1 = \partial_{x_{n+1}} u_S = \partial_{x_n} u_S \cdot \partial_{x_{n+1}} u_H,
\end{equation*}
which implies 
\begin{equation*} \displaystyle
     \partial_{x_n} u_S = \frac{1}{\partial_{x_{n+1}} u_H}.
\end{equation*}
On the other hand, for $x_j$, $j=1,\dots, n-1$, and $t$, we have, using the previous equality,
\begin{align*}
    &0 = \partial_{x_j} u_S + \partial_{x_n} u_S \cdot \partial_{x_j} u_H  \Longrightarrow  \partial_{x_j} u_S = - \frac{\partial_{x_j} u_H}{\partial_{x_{n+1}} u_H },\\
    &{0 = \partial_{t} u_S + \partial_{x_n} u_S \cdot \partial_{t} u_H  \Longrightarrow  \partial_{t} u_S = - \frac{\partial_{t} u_H}{\partial_{x_{n+1}} u_H },}
\end{align*}
and thus
\begin{align*}
    &\nabla u_S = - \frac{1}{\partial_{y_n}u_H}\left( \nabla_{y'} u_H, -1 \right), \quad \partial_{t} u_S = - \frac{\partial_{t}u_H}{\partial_{y_n} u_H}, \\
    &D^{2}u_S= -\frac{1}{\partial_{y_n}u_H} {(A(\nabla u_H))^T} \, D^2 u_H \, A(\nabla u_H),
\end{align*}
where $A(\nabla u_H)$ is a square matrix which coincides with the identity except for the $n$-th row in which the entries are given by the right hand side of $\nabla u_S.$ Then, we can transform the Stefan problem \eqref{eq:StefanNH} to a quasilinear parabolic equation with oblique derivative boundary condition of the type
\begin{equation}\label{eq:HodographStefan}
        \begin{cases}
        \partial_{t} u_H - \tr( \Bar{A}(\nabla u_H) D^2 u_H) + \partial_{y_n}u_H \,  f(y',u_H,t)=0 & \text{in} \,\, \{ y_n > 0 \},\\
       \partial_{t} u_H = g(\nabla  u_H)& \text{on} \,\,  \,\, \{ y_n = 0 \},
    \end{cases}
    \end{equation}
    where, taking $p=(p',p_n) \in \R^n,$ 
    \begin{equation} \label{eq:g}
        g(p)= - \frac{1}{p_n} \left( 1+ \nor{p'}^2 \right)  
    \end{equation} and
    \[
\Bar{A}(p)= \begin{pNiceArray}{c|c}
\text{ \begin{Large} $I_{n-1}$ \end{Large}} &   \displaystyle -\frac{p'}{p_n}\\
  \hline
 \displaystyle -\frac{p'}{p_n} &  \displaystyle \frac{1}{p_n^2} \left( 1+\nor{p'}^2 \right)
\end{pNiceArray},
\]
with eigenvalues 
    \begin{equation*}
       \spec(\Bar{A}) = \left\{1, \frac{1  + \nor{p}^2 - \sqrt{\left(1 + \nor{p}^2\right)^2 - 4p_n^2}}{2p_n^2},\frac{1  + \nor{p}^2 +
       \sqrt{\left(1 + \nor{p}^2 \right)^2 - 4p_n^2}}{2p_n^2}\right\}.
    \end{equation*}
     The matrix $\Bar{A}$ is positive definite and $\partial_{p_n} g(p) > 0 $ as long as $p_n \neq 0$. Define 
    \begin{equation*}
       \mathcal{R}_K:= B_{K}(0) \cap \{ p_n  \geq K^{-1}\} \subset \mathbb{R}^{n},
    \end{equation*}
    and notice that if $p\in \mathcal{R}_K$ , then $\partial_{{ p_n}} g(p) > 0 $. 
    If $K > 0$  is chosen large enough, then we may assume that $\tr(\Bar{A}(p)M) $ is uniformly elliptic in $M$ with ellipticity constants $K$ and $K^{-1}$ in $\mathcal{R}_K$.
    Furthermore, in this set, we can suppose that
    \begin{equation} \label{eq:boundg}
        \nor{\nabla_{p} \tr(\Bar{A}(p)M)} \leq K \nor{M}, \quad \norm{g}_{C^1} \leq K, \quad \partial_{p_n} g(p) \geq K^{-1}.
    \end{equation}

The hodograph transform is very useful because it flattens the free boundary, and thus, the problem is transformed from a free boundary problem into one with a fixed boundary but with possible multi-valued solutions. This issue of multi-valued functions becomes the main subtlety with the transformed equation \eqref{eq:HodographStefan}.
Indeed, one must adapt the definition of viscosity solution to include possibly multi-valued functions as solutions of the following fully nonlinear problem, recall \eqref{eq:C_r-F_r},
\begin{equation}\label{eq:FullynonlinearProb}
\begin{cases}\partial_t u = F(x,t,u,\nabla u, D^2u)  &\text{in $\mathcal C_{\lambda}$ }\\
\partial_t u =g( \nabla u)& \text{on $\mathcal F_\lambda$.} 
\end{cases}
\end{equation}
\begin{Definition}
    We say that a  continuous function  $u : \overline{\mathcal{C}_\lambda} \rightarrow \R$ is a \textit{viscosity subsolution} to \eqref{eq:FullynonlinearProb} if $u$ cannot be touched by above at points in $\mathcal{C}_\lambda \cup \mathcal{F}_\lambda$ by a strict $C^2$-supersolution $\varphi$ of \eqref{eq:FullynonlinearProb}. More precisely, we require that there do not exist points $(x_0,t_0) \in \mathcal{C}_\lambda \cup \mathcal{F}_\lambda $ and test functions $\varphi \in C^2( \mathcal{P}_r(x_0,t_0))$ satisfying 
\begin{equation*}
\begin{cases}    
\partial_t \varphi > F(x,t, \varphi,\nabla \varphi, D^2\varphi)  &\text{in $ \mathcal{P}_r(x_0,t_0),$}\\
\partial_t \varphi >g( \nabla \varphi)& \text{on $\mathcal{F}_\lambda \cap  \left( \mathcal{P}_r(x_0,t_0)\right)$,}
\end{cases}
\end{equation*}
such that 
\begin{equation} \label{eq:TestFullynonlinear2}
    u(x_0,t_0)= \varphi(x_0,t_0), \; u \leq \varphi \; \text{in  $ \mathcal{P}_r(x_0,t_0).$}
\end{equation}
Similarly, we can define \textit{viscosity supersolutions} and \textit{viscosity solutions} to \eqref{eq:FullynonlinearProb}.
\end{Definition}

This definition can be generalized to multi-valued functions requiring the comparison with a (single-valued) test function $\varphi$ to hold for all possible values.
\begin{Definition} \label{def:multivalue_sol}
    Let $u : \overline{\mathcal{C}_\lambda} \rightarrow \R$ be a multi-valued function with compact graph in $\R^{n+2}$. We say that $u$ is a viscosity subsolution of \eqref{eq:FullynonlinearProb} if the definition above holds and \eqref{eq:TestFullynonlinear2} is understood as $\varphi(x_0,t_0) \in u(x_0,t_0)$ and $u(x,t) \leq \varphi(x,t)$ for all possible values of $u$ at $(x,t)$ and all $(x,t) \in  \mathcal{P}_r(x_0,t_0). $
\end{Definition}

\begin{Remark} \label{Rmk:flat}
    Using the  flatness assumption \eqref{Flatness}, we can localize the set where $u_H$ is possibly multi-valued. We will prove that $u_H$ is single-valued in the set $\{y_n \geq C_0\varepsilon_1\lambda\}$.  Let $t\in [t_0-\lambda^2,t_0+\lambda^2]$. Then, from the bounds on  $|a_n'|$ and $|b'|$ in the assumptions of Theorem \ref{Theorem:flatsolution}  and \eqref{eq:relaxed_hp}, we get
    \begin{equation*}
            {\nor{a_{n}(t)-a_{n}(t_0)} \leq c_0,
            \qquad\nor{b(t)-b(t_0)} \leq  C}\lambda^{2}.
    \end{equation*}
Without loss of generality, we can assume {that $\lambda, c_0 \leq \varepsilon_1$,} for which we have, according to the assumptions of Theorem \ref{Theorem:flatsolution} again, with $|b(t_0)|\le \lambda/2$, 
\begin{align*}
    &\nor{u_S-a_n(t_0) (x_n-b(t_0))^{+}} \leq \,\nor{u_S - a_n(t)(x_n - b(t))^+} + \nor{a_n(t_0)} \nor{(x_n -b(t))^{+} -(x_n -b(t_0))^{+}  } \\
    &+\,\nor{a_n(t)-a_n(t_0)} \nor{(x_n-b(t))^{+}}\leq\, C \lambda \varepsilon_1 + C \nor{b(t) - b(t_0)} + c_0 \nor{x_n-b(t)}\\ &
    \leq  C\, \lambda \varepsilon_1 +  C^2 \lambda^2 +  3c_0 \lambda \leq C_1 \varepsilon_1 \lambda\quad\mbox{in } \mathcal{P}_\lambda:={Q_{\lambda}} \times [t_0-\lambda^2, t_0 + \lambda^2].
\end{align*}

Let now $(x_0,t_0)$ be such that $x_0 \in Q_{\lambda}, t_0 > -c\lambda$, and $C_0 \varepsilon_1 \lambda \leq u_S(x_0,t_0) \leq c\lambda$, for some $C_0$ to be chosen large. Note that $u_S-a_n(t_0)(x_n - b(t_0))^+$ still solves the heat equation with the same source term in cylinders away from the free boundary.\\ We want to estimate $d:=\text{dist}((x_0,t_0),\partial \{u_S>0\})$. Since 
\begin{align*}
    C_0\varepsilon_1 \lambda\leq u_S(x_0,t_0)\leq C_1 \varepsilon_1 \lambda+a_n(t_0)((x_0)_n-b(t_0))^+,
\end{align*}
we obtain
\begin{align*}
    (x_0)_n\geq b(t_0)+(C_0-C_1)\lambda \varepsilon_1 a_n^{-1}(t_0),
\end{align*}
{provided that $C_0>C_1$}.
For the same time $t_0$, let $\bar x$ be the closest point to $x_0$ on the free boundary. We have $u_S(\bar x, t_0)=0$. If $\bar x \not \in Q_\lambda(x_0)$, then it immediately follows because $d\geq \lambda$. Otherwise, it holds, from above,
\[
    a_n(t_0)(\bar x_n-b(t_0))^+\leq {C_1} \varepsilon_1 \lambda{,}
\]
{which gives}
\[
    \bar x_n\leq b(t_0)+a_n^{-1}(t_0){C_1}\varepsilon_1\lambda.
\]
Therefore,
\[
    d\geq |(x_0)_n-\bar x_n|\geq (C_0-2{C_1})\varepsilon_1\lambda a_n^{-1}(t_0).
\]
Combining this with the interior gradient estimates, see, for instance, \cite{LiebermanParabolic}, we get that there exists $C_e${, only depending} on $n$, such that, {recalling that $\| f\|_{L^\infty(\mathcal{P}_\lambda)}\leq \varepsilon_1^2$,}
\begin{align*}
    &\nor{\nabla u_S (x_0,t_0) - a_n(t_0)e_n} \leq \, C_e d^{-1}\left(\norm{u_S-a_n(t_0)(x_n - b(t_0))^+}_{L^{\infty}(\mathcal{P}_\lambda)}+ \lambda \norm{f}_{L^{\infty}(\mathcal{P}_\lambda)} \right)\\
    &\leq \, C_e(C_0-2C_1)^{-1}(\varepsilon_1\lambda)^{-1}a_n(t_0)\left(C_1\varepsilon_1\lambda+\lambda \varepsilon_1^2\right) =\, C_e\frac{{C_1}+\varepsilon_1}{C_0-2{C_1}}a_n(t_0){.}
\end{align*}
Taking $C_0=(4C_e+2){C_1}${, this} yields
\begin{align*}
    \nor{\nabla u_S (x_0,t_0) - a_n(t_0)e_n}\leq \frac{1}{2}a_n(t_0),
\end{align*}
which implies $|\nabla u_S(x_0,t_0)|\geq a_n(t_0)/2\geq (2K)^{-1}${. Hence,} by the inverse function theorem, the hodograph transform $u_H$ is well defined at such points. Actually, we get $\nabla u_S(x_0,t_0)\in \mathcal{R}_{2K}${,} which, in terms of the hodograph transform, still reads $\nabla u_H(x_0,t_0)\in \mathcal{R}_{CK}$, where $C>0$ is a large universal constant, which we will be omitted in the following renaming $CK$ with $K$.

\end{Remark}

\begin{Remark}\label{Rmk:flatimpliesreg}
     The main hypothesis of Theorem \ref{Theorem:flatsolution} can be written in terms of the hodograph transform as
    \begin{align*}
        &\nor{u_H - (\bar a_n(t)y_n+\bar b(t))} \leq C\varepsilon_1 {\bar \lambda}\quad   {\text{in }} \mathcal{C}_{\bar \lambda},  \\
        &\bar b'(t)=g(\bar a_n(t) e_n), \quad  K^{-1} \leq \bar a_n \leq K, \quad \nor{\bar a_n'} \leq c_1 \bar \lambda^{-2},
    \end{align*}
    with $\bar \lambda\leq {\bar\lambda_1}$ for some $\bar \lambda_1>0$ small. 
    
    Therefore, we can reduce the initial problem of obtaining regularity of a flat free boundary for $u_S$ to proving an improvement of flatness for the hodograph transform $u_H$.
\end{Remark}

For simplicity, from now on, we will refer to the transform function $u_H$ as $u$ and the solution of \eqref{eq:StefanNH} will be denoted by $u_S$. We emphasize that $u$ carries many properties from being the transform function of $u_S$. Moreover, we drop the bar from $\bar A, \bar \lambda, \bar a, \bar b$ and {use} $x$ rather than $y$.

\subsection{The inhomogeneous $H$-property}\label{The inhomogeneousHproperty} In this subsection, we explore an essential Harnack-type property of solutions {to} \eqref{eq:HodographStefan}. For this purpose, we introduce the function
\begin{equation*}
    l_{a,b}(x):= a \cdot x + b. 
\end{equation*}
Here, $a\in \mathbb{R}^{n}$, $\nor{a} \leq K$, and $b \in \mathbb{R}$. In the following, $l_{a,b}$ will denote a more general function, linear in $x$. Then, $\omega =u- l_{a,b}$ solves
\begin{equation*}
     \partial_{t} \omega - \tr( \Bar{A}(\nabla \omega + a) D^2 \omega) + (\partial_{y_n}\omega + a_n) \,  f(y',\omega + a \cdot x + b,t)=0.
\end{equation*}
By uniform ellipticity, we have
\begin{equation*}
\begin{aligned}
    \partial_{t} \omega -\Mp(D^2 \omega) + \partial_{y_n}\omega f + a_n f \leq 0,
\end{aligned}
\end{equation*} 
and thus,
\begin{align}\label{eq:M+omega}
     \partial_{t} \omega -\Mp(D^2 \omega) - \nor{\nabla \omega} \norm{f}_{L^{\infty}} - a_n \|f\|_{\infty} \leq 0.
\end{align}
Analogously for $\Mm$,
\begin{equation}\label{eq:M-omega}
    \partial_{t} \omega - \Mm(D^2 \omega) + \nor{\nabla \omega} \norm{f}_{L^{\infty}} + a_n \|f\|_{\infty} \geq 0.
\end{equation}

Note that, for every $Q_r(x_0)\subset \mathcal{C}_\lambda$ with $r>2C_0\varepsilon_1\lambda${, it holds} that $u$ (and {hence,} $\omega$) is single-valued in $Q_{r/2}(x_0)\times [t_0-r^2,t_0+r^2]$ by Remark \ref{Rmk:flat}. {Consequently,} by the Harnack inequality for the extremal equations \eqref{eq:M+omega} and \eqref{eq:M-omega} obtained in  \cite[Corollary 4.6]{koike2019weak}, we have{, recalling that $K^{-1} \leq  a_n \leq K$,}
\begin{equation*}
    \sup_{\substack{Q_{r/2}(x_0) \times [t_0-r^2, t_0]}} \omega \leq C_H \left( \inf_{\substack{Q_{r/2}(x_0) \times [t_0+\frac{r^2}{2}, t_0+r^2]}} \omega + C_n a_n \norm{f}_{L^{\infty}}\right),
\end{equation*}
where $C_H=C_H(n,K,\norm{f}_{L^{\infty}})$ and $C_n$ is a dimensional constant, if $$\omega=u - l_{a,b} \geq 0 \qquad \text{ in } Q_{r}(x_0) \times [t_0-r^2, t_0+r^2], \quad r \geq C \varepsilon_1 \lambda.$$ {Therefore, if} 
\begin{equation*}
    {\omega(x_0,t_0)=(u-l_{a,b})(x_0,t_0)} \geq \mu, \qquad \text{for some } \mu \geq 0,
\end{equation*}
then {we infer}
\begin{equation*}
    {u-l_{a,b}} \geq c_1 \mu - K \norm{f}_{L^{\infty}} \geq c_1\mu - K\varepsilon_1^{2}\ge c\mu\quad \mbox{in } Q_{r/2}({x_0}) \times {\Big[t_0+\frac{r^2}{2}, t_0+r^2\Big]},
\end{equation*}
up to renaming $K$ and as long as $\varepsilon_1$ is sufficiently small. An analogous statement holds for ${\omega\le 0}$.

Now, we are ready to state the $H(\sigma)$-property for a general $\sigma >0$ small, which we have just proved for $\sigma=c\varepsilon_1$.
\begin{Definition}[$H(\sigma)$-property]\label{Def:H-prop}
    Given a {small constant $\sigma>0$}, we say that \textit{$u$ has {the} property $H(\sigma)$ in $\mathcal{C}_\lambda$} if {when}
    \begin{equation*}
        u - l_{a,b} \geq 0 \qquad \text{ in } Q_{r}(x_0) \times [t_0-r^2, t_0+r^2] \subset \mathcal{C}_{\lambda},
    \end{equation*}
    for any $r \in [\sigma \lambda, \lambda]$, with 
    \begin{equation*}
    (u - l_{a,b})(x_0,t_0)\geq \mu, \qquad \text{for some } \mu \geq 0,
\end{equation*}
{there} exists a universal $c>0$ such that \begin{equation*}
    u-l_{a,b} \geq c\mu \quad \text{ in } Q_{\frac{r}{2}}(x_0) \times \Big[t_0+\frac{r^2}{2}, t_0+r^2\Big].
\end{equation*}
\end{Definition}
Property $H(\sigma)$ for all $\sigma >0$ is a consequence of the parabolic Harnack inequality in \cite{koike2019weak} when $u$ is a single-valued viscosity solution of \eqref{eq:HodographStefan}. Property $H(\sigma)$ for a multi-valued solution of \eqref{eq:HodographStefan} roughly states that u behaves
 as a single-valued function from scale $\lambda$ up to scale $\sigma \lambda$. 
 Indeed, we will show that property $H(\sigma)$ holds for multi-valued solutions that are graphical in the $e_n$-direction and are well approximated by functions of the form $l_{a,b}$. These properties of a solution to \eqref{eq:HodographStefan} stem from the fact that it is obtained via a transformation of a solution to \eqref{eq:StefanNH} and the flatness assumption.

\subsection{Statement and discussion of our main result}

Let $\ell_{a,b}$ be the linear approximation of $u$ in $x$ for each $t$ fixed, defined as 
\begin{equation}\label{eq:linear-approx}
    \ell_{a,b}(x,t) := a(t) \cdot x + b(t),
\end{equation}
where
\begin{equation}\label{eq:condit-linear-approx}
    a(t):= (a_1, \dots, a_{n-1}, a_n(t))= (\hat{a},a_n(t)), \; \hat{a} \in \mathbb{R}^{n-1}.
\end{equation}
In order to satisfy boundary condition of \eqref{eq:QuasiStefanNH}, we further ask
\begin{equation} \label{eq:b1}
    b'(t) = g(a(t)).
\end{equation}
{We are now} able to state our main result concerning the improvement of flatness for a solution to \eqref{eq:QuasiStefanNH}.

\begin{Proposition}[Improvement of flatness] \label{Prop:ImprovFlat}
    Fix $K > 0$ large. Let $u$ be a graphical in the direction $e_n$, possibly multi-valued, viscosity solution to \eqref{eq:QuasiStefanNH}
    with the assumption $\norm{f}_{L^{\infty}(\Bar{\mathcal{C}}_\lambda)} \leq {\varepsilon^2}$. {Suppose that $u$ satisfies} the property $H(\varepsilon^{1/2})$ and 
    \begin{equation} \label{Hp:assum-flat}
            \nor{u-\ell_{a,b}} \leq \varepsilon \lambda\quad\text{in } \bar{\mathcal{C}}_{\lambda},
    \end{equation}
    with 
    \begin{equation}\label{Hp:condit-assum-flat}
    	b'(t)=g(a(t)),\quad a(t) \in \mathcal{R}_{K},\quad \nor{a_n'(t)} \leq \delta \varepsilon \lambda^{-2},
    \end{equation}
    where
    \begin{equation*}
        \varepsilon \leq \varepsilon_{1}, \quad \lambda \leq \lambda_{1}, \quad \lambda \leq \delta \varepsilon.
    \end{equation*}
    Then, there exists $\ell_{\Tilde{a},\Tilde{b}}$ such that
    \begin{equation*}
        \nor{u- \ell_{\Tilde{a},\Tilde{b}}} \leq \frac{\varepsilon}{2} \tau \lambda \; \text{ in } \Bar{\mathcal{C}}_{\tau \lambda }, \quad \Tilde{b}'(t)=g(\Tilde{a}(t)),
    \end{equation*}
    with
    \begin{equation*}
        \nor{a(t) - \Tilde{a}(t)} \leq C \varepsilon, \quad \nor{\Tilde{a}_n'(t)} \leq \frac{\delta \varepsilon}{2} (\tau \lambda)^{-2}.
    \end{equation*}
    Here, the constants $\varepsilon_{1}, \lambda_{1}, \delta, \tau > 0$ small, and $C$ large only depend on $n$ and $K$.
    \end{Proposition}
     
\begin{Remark}\label{Rmk:H-rescale}
 We now justify the assumptions of Proposition \ref{Prop:ImprovFlat}. Since we apply it to the hodograph transform of a solution to the original Stefan problem, we already know that it is graphical with respect to the $e_n$-direction. Furthermore, from \eqref{Hp:assum-flat} and using the bounds on $|a_n'|$ and $|b'|$ as in Remark \ref{Rmk:flat}, we can show that in each parabolic cylinder of size $\lambda$, $u$ is well approximated by a time-independent linear function, namely, 
\begin{equation}\label{eq:approx_hodog_linear_const_time}
	|u - (a(t_0)\cdot x +b(t_0))| \leq 2\varepsilon \lambda \qquad \text{in } Q_{\lambda}^+ \times [t_0-\lambda^2,t_0+\lambda^2].
\end{equation}
Now, by virtue of the discussion in Subsection \ref{The inhomogeneousHproperty}, it follows that $u$ satisfies the property $H(c\varepsilon_1)$. Taking $\varepsilon_1$ small enough, we get that $u$ also satisfies $H(\varepsilon^{1/2})$ as well.
\end{Remark}

\vspace{5pt}
The proof of Proposition \ref{Prop:ImprovFlat} will be presented in Section \ref{Section:C1a}.
Here, we start by defining a useful function, expressing 
\begin{equation}\label{eq:w_error}
    u(x,t) = \ell_{a,b}(x,t) + \varepsilon \lambda w\left( \frac{x}{\lambda}, \frac{t}{\lambda}\right), \quad (x,t) \in \mathcal{C}_{\lambda},
\end{equation} 
where $w(x,t)$ is the rescaled error function, defined in $\mathcal{C}_1,$ possibly multi-valued near $\{ x_n = 0 \}$, and satisfying $\nor{w} \leq 1$ in $\mathcal{C}_1$, {in view of \eqref{Hp:assum-flat}}.\\
Exploiting \eqref{eq:w_error}, \eqref{eq:linear-approx}, and \eqref{eq:condit-linear-approx}, \eqref{eq:QuasiStefanNH} becomes
    \begin{equation}\label{eq:LinStefanNH}
        \begin{cases}
        \lambda^2 \varepsilon^{-1} a_n'(\lambda t) x_n + \lambda \varepsilon^{-1} b'(\lambda t) + \lambda \partial_t w(x,t) - \tr(A(a(\lambda t) + \varepsilon \nabla w)) D^2 w ) +\\ \left( \lambda \partial_{x_n}w+\lambda \varepsilon^{-1} a_n(\lambda t) \right) f(x',\ell_{a,b}+\varepsilon\lambda w,t) =0  & \text{in } \mathcal{C}_1,\\
       \varepsilon^{-1} b'(\lambda t) + \partial_{t} w = \varepsilon^{-1}  g(a(\lambda t) + \varepsilon \nabla w)& \text{on } \mathcal{F}_1.
    \end{cases}
    \end{equation}
  {From \eqref{eq:g}, \eqref{eq:b1}, and again \eqref{eq:condit-linear-approx},} we can write the boundary condition in a convenient way as
    \begin{equation*}
   \partial_{t} w =  \displaystyle\frac{(1 + \nor{\hat{a}}^2) \partial_{x_n}w}{a_{n}(\lambda t) (a_{n}(\lambda t) + \varepsilon \partial_{x_n}w)} - \frac{\varepsilon \nor{\nabla_{x'}w}^2+ 2 \ps{\hat{a}}{\nabla_{x'}w}}{a_{n}(\lambda t) + \varepsilon \partial_{x_n} w} \quad \text{on} \,\,  \,\, \mathcal{F}_1.
    \end{equation*}
    Now, letting $\varepsilon \rightarrow 0, \delta \rightarrow 0 $  and using that $\nor{a_n'} \leq \delta \varepsilon \lambda^{-2}$, $a(\lambda t)  \in \mathcal{R}_K$, and $\lambda \varepsilon^{-1} \leq \delta \rightarrow 0$, we formally obtain, in light of \eqref{eq:LinStefanNH}, that our equation gets closer to 
    \begin{equation}\label{eq:LimitStefanNH}
        \begin{cases}
        \lambda \partial_t v = \tr(A_{\lambda}(t) D^2 v )& \text{in} \,\, \mathcal{C}_1,\\
     \partial_t v = \gamma_{\lambda} (t) \cdot \nabla v & \text{on} \,\,  \,\, \mathcal{F}_1,
    \end{cases}
    \end{equation}
    where  
    \begin{equation*}
        A_{\lambda}(t):= A(a(\lambda t)), \quad \gamma_{\lambda} (t) :=  \nabla g (a(\lambda t)).
    \end{equation*}
{According to the fact that $A,g \in C^2(\mathcal{R}_K)$, \eqref{eq:boundg}, and \eqref{Hp:condit-assum-flat}}, we have {the} following conditions on $A_\lambda$ and $\gamma_\lambda$ 
\begin{align*}
    K^{-1}I \leq A_\lambda(t) \leq K I,  \quad
    \nor{A_\lambda'(t)} \leq \lambda^{-1}, \quad  \gamma_\lambda \in \mathcal{R_K},
    \quad
    \nor{\gamma_\lambda'(t)} \leq \lambda^{-1}.
\end{align*}



\section{Regularity result for the error function} \label{Section:propertiesv}
In this section, first we recall some regularity results for solutions to the homogeneous version of \eqref{eq:LimitStefanNH}, then we obtain new comparison results relating this equation to \eqref{eq:LinStefanNH}.
Finally, we prove some regularity results for the error function $w$.

In the following, it will be useful to consider the distance in $\R^{n+1}$
 \begin{align*}
    d_\lambda((x,t),(y,s)):=\min\left\{|x'-y'|+|x_n-y_n|+\lambda^{-\frac{1}{2}} |t-s|^{\frac{1}{2}}, |x'-y'|+|x_n| + |y_n| +|t-s|    \right\},
 \end{align*}
interpolating between the parabolic and standard Euclidean distances depending on the location of the points. We call  $\mathcal{B}_{\lambda,r}$ the ball induced by the metric $d_\lambda$, defined  as in \cite[Section 5.1]{DFS23}.  More precisely, for $x_0=(x_0',(x_0)_n) \in \R^{n-1} \times \R$ and  $t_0 \in \R$,  it holds
	 \begin{equation}\label{eq:balls-d_lambda}
		\mathcal{B}_{\lambda,r}(x_0,t_0):=\begin{cases}
			 Q_r(x_0)\times (t_0-\lambda r^2,  t_0)\quad&\text{if }r<|(x_0)_n|,\\
			 Q_r^+(x_0)\times (t_0-r,  t_0)\quad&\text{if }|(x_0)_n|\le r\le \lambda^{-1}.
		\end{cases}
	\end{equation}
Also, we denote by \[ \norm{v}_{C_{d_\lambda}^{\alpha}}:= \norm{v}_{L^\infty}+  \sup_{(x,t)\neq (y,s)} \frac{|v(x,t)-v(y,s)|}{d_\lambda((x,t),(y,s))^{\alpha}}\]
the H\"older spaces with respect to this metric.

\subsection{Regularity results for the limiting profile} 

The following regularity results can be found in \cite{DFS23}.

\begin{Proposition}[Interior estimates] \label{Prop:Interiorv}
    Let $v$ be a viscosity solution of \eqref{eq:LimitStefanNH} such that \\$\|v\|_{L^\infty} \leq 1.$ Then,
$$| \nabla v|, \, \, \,  |D^2 v| \, \, \leq C \quad \quad \textit{in} \quad \mathcal C_{1/2},$$
and there exists $\ell_{\bar a,\bar b}$  such that for every $\rho \leq 1/2$,
$$|v - \ell_{\bar a, \bar b}|\leq C \rho^{1+\alpha} \quad \quad \text{in $\mathcal C_{\rho}$},$$
with 
$$\bar b'(t)= \gamma_\lambda(t) \cdot \bar a, \quad \quad |\bar a'_n| \leq C \rho^{\alpha-1}\lambda^{-1}, \quad \quad |\bar a| \le C,$$ for some $\alpha$, $C$ universal.

\end{Proposition}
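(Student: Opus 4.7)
The plan is to first establish uniform bounds on $\nabla v$ and $D^2 v$ up to the fixed boundary $\mathcal{F}_{1/2}$, and then to iterate an improvement-of-flatness step to extract the pointwise affine expansion. In the interior of $\mathcal{C}_1$ the equation $\lambda \partial_t v = \tr(A_\lambda(t) D^2 v)$ is linear uniformly parabolic in $x$ with coefficients depending only on $t$. After the time rescaling $\tau = t/\lambda$, the coefficient $A_\lambda(\lambda \tau)$ becomes Lipschitz in $\tau$ (since $|A_\lambda'|\le \lambda^{-1}$), so differentiating in $x$ each partial $\partial_{x_i} v$ solves the same homogeneous equation, and classical interior Schauder/Krylov--Safonov estimates yield $|\nabla v|, |D^2 v| \le C$ away from $\mathcal{F}_1$.

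To bring these bounds up to $\mathcal{F}_1$, I would use the boundary condition $\partial_t v = \gamma_\lambda(t)\cdot \nabla v$. Since $\gamma_\lambda \in \mathcal{R}_K$, its $n$-th component is bounded below by $K^{-1}$, so the condition is uniformly oblique, and the bound $|\gamma_\lambda'|\le \lambda$ provides Lipschitz dependence in time at the correct scale. Standard boundary Schauder estimates for oblique parabolic problems on a flat boundary, expressed with respect to the distance $d_\lambda$ that simultaneously encodes the interior parabolic scaling and the boundary hyperbolic scaling, then yield the global bounds on $|\nabla v|$ and $|D^2 v|$ in $\mathcal{C}_{1/2}$. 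The asymmetric bound $|\bar a_n'|\le C\rho^{\alpha-1}\lambda^{-1}$ in the statement already reflects the fact that the natural time variable on the boundary is $\lambda$ times the natural interior time variable.

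For the pointwise $C^{1+\alpha}$ expansion at the origin, I would run a compactness/contradiction argument in the spirit of De Silva's improvement of flatness. Assuming the conclusion fails at a sequence of scales $\rho_k\to 0$, rescale by $\tilde v_k(x,t) := \rho_k^{-1-\alpha}\bigl(v - \ell_{\bar a_k,\bar b_k}\bigr)(\rho_k x,\rho_k t)$ around the best admissible affine profile at scale $\rho_k$, and use the uniform $C^2$ estimates of the first two steps plus Arzel\`a--Ascoli to pass to a limit $\tilde v_\infty$. The limit is a nontrivial bounded solution of a constant-coefficient version of \eqref{eq:LimitStefanNH} that grows sublinearly at the origin, and the constraint $\bar b'(t)=\gamma_\lambda(t)\cdot\bar a$ passes to the limit and ties together the spatial and temporal affine corrections. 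By classical constant-coefficient theory for linear oblique parabolic problems, such a $\tilde v_\infty$ must itself be affine and satisfy the same constraint, contradicting the failure assumption. The main obstacle is precisely the two-scale nature of the problem: the interior evolution lives on time scale $\lambda$ while the boundary condition lives on time scale $1$, so ensuring that compactness, rescaling, and the admissibility constraint on $(\bar a,\bar b)$ remain consistent when measured in $d_\lambda$ is the delicate part, and is exactly where the Schauder framework of \cite{DFS23} is needed.
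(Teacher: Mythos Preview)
The paper does not actually prove this proposition: it states explicitly that ``the following regularity results can be found in \cite{DFS23}'' and simply imports Proposition~\ref{Prop:Interiorv} from that reference, since the linearized limiting equation \eqref{eq:LimitStefanNH} is identical to the one arising in the homogeneous Stefan problem. So there is nothing to compare your proposal against in this paper itself.

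That said, your sketch is a fair summary of the strategy actually carried out in \cite{DFS23}: interior parabolic estimates after rescaling time by $\lambda$, boundary Schauder estimates for the uniformly oblique condition $\partial_t v=\gamma_\lambda(t)\cdot\nabla v$ phrased with respect to the interpolating distance $d_\lambda$, and then an iteration/compactness argument to extract the affine expansion with the constraint $\bar b'(t)=\gamma_\lambda(t)\cdot\bar a$. The delicate point you correctly flag---that the interior and boundary live on different time scales, so all compactness and rescaling must be done in $d_\lambda$---is exactly the technical heart of the argument in \cite{DFS23}, and your proposal defers to that reference at the right moment. For the purposes of this paper, a one-line citation is all that is required; your outline would be appropriate if you were rewriting \cite{DFS23} rather than quoting it.
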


\begin{Proposition}[The Dirichlet problem] \label{Prop:Dirichletv}
    Let $\phi$ be a continuous function on $\p_D \mathcal C_1$. Then, there exists a unique classical solution $v \in C^{2,1} (\mathcal C_1) \cap C(\bar {\mathcal C_1})$ to the Dirichlet problem \eqref{eq:LimitStefanNH} with $v=\phi$ on $ \p_D \mathcal C_1$. Moreover,
$$|\nabla v|, |D^2 v| \le C(\sigma) \|v\|_{L^\infty} \quad \textit{in} \quad \mathcal C_1^\sigma:=\{(x,t):\,d_\lambda((x,t), \p_D \mathcal C_1) \ge \sigma\},$$
and if $\phi$ is $C^\alpha$ with respect to the distance $d_\lambda,$ then $v$ is $C^\alpha$ up to the boundary and 
$$\|v\|_{C^
\alpha_{d_\lambda}} \le C \|\phi\|_{C^\alpha_{d_\lambda}},$$
with $C(\sigma)$, $C$ universal constants, independent of $\lambda$.
\end{Proposition}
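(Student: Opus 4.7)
The plan is to establish Proposition 4.2 in three stages: existence, uniqueness, and then the two regularity statements (interior smoothness and boundary $C^\alpha$ in $d_\lambda$).

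For existence and uniqueness, I would first observe that the equation is linear with coefficients depending only on $t$, uniformly parabolic in $\mathcal{C}_1$ (since $K^{-1}I \le A_\lambda \le KI$), and that the boundary condition on $\mathcal{F}_1$ is oblique in the classical sense. Indeed, $\partial_t v - \gamma_\lambda(t)\cdot \nabla v = 0$ can be rewritten as a boundary condition in which the coefficient of $\partial_{x_n} v$ is $(\gamma_\lambda)_n$, which is bounded below by $K^{-1}$ because $\gamma_\lambda \in \mathcal{R}_K$. I would first regularize $\phi$ to smooth data $\phi_k$ compatible with the oblique condition at the corner $\mathcal{F}_1\cap\partial_D\mathcal{C}_1$, apply the classical theory for linear parabolic equations with oblique boundary data (e.g.\ Lieberman's book) to get a classical solution $v_k\in C^{2,1}(\mathcal C_1)\cap C(\bar{\mathcal C_1})$, and pass to the limit using the maximum principle $\|v_k\|_{L^\infty}\le \|\phi_k\|_{L^\infty}$. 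Uniqueness follows from the comparison principle for the same oblique problem.

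For the interior estimate, I would combine two ingredients. Away from a neighborhood of $\mathcal F_1$, the equation is a linear parabolic equation with $t$-dependent smooth coefficients, so classical interior Schauder estimates bound $|\nabla v|$ and $|D^2 v|$ by $C(\sigma)\|v\|_{L^\infty}$. Near $\mathcal F_1$ (but away from $\partial_D\mathcal C_1$) one differentiates the boundary condition tangentially and in time to express $\partial_t v$ in terms of $\nabla v$, and then applies the boundary Schauder estimates for oblique derivative problems. The bounds on $A_\lambda'$ and $\gamma_\lambda'$ (which are compatible with the parabolic/hyperbolic scaling $d_\lambda$) ensure the constants are independent of $\lambda$ once one works in the correct scaled cylinders.

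The $C^\alpha$ estimate up to the boundary is the heart of the matter, and I expect it to be the main obstacle. The distance $d_\lambda$ is designed to interpolate between the parabolic scaling $|x|^2\sim |t|$ that governs the interior diffusion $\lambda\partial_t v = \tr(A_\lambda D^2 v)$ and the hyperbolic scaling $|x|\sim|t|$ forced by the boundary condition $\partial_t v = \gamma_\lambda\cdot\nabla v$ on $\mathcal F_1$; at scales $r\gtrsim\lambda$ only the boundary scaling is seen, while at scales $r\ll\lambda$ the parabolic one takes over. The strategy is a Campanato-type iteration: at each boundary point $(x_0,t_0)$, produce an affine comparator $\ell_{\bar a,\bar b}$ satisfying the linearized boundary condition $\bar b'=\gamma_\lambda\cdot\bar a$, and prove a dyadic oscillation decay
\begin{equation*}
\osc_{B_r(x_0)\times(t_0-r^2,t_0]\cap\mathcal C_1}\bigl(v-\ell_{\bar a,\bar b}\bigr)\le C\,r^{1+\alpha}\|\phi\|_{C^\alpha_{d_\lambda}}
\end{equation*}
measured in $d_\lambda$-cylinders. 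The decay is obtained by comparison with explicit barriers built from the same limiting equation, whose existence and regularity at one scale is exactly the content of Proposition 4.1. Summing the geometric series converts the decay into a $C^\alpha$ modulus in $d_\lambda$ up to the boundary; the interior/parabolic part is already controlled by Step 2, so the two estimates match at scale $r\sim\sigma$. The essential difficulty is that the barriers must be chosen in the correct regime (parabolic or hyperbolic) depending on $r$ versus $\lambda$, and the constants must be shown to be independent of $\lambda$, which is the reason for the specific form of $d_\lambda$.
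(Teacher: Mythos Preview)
The paper does not prove this proposition at all: it is stated in Section~\ref{Section:propertiesv} under the heading ``Regularity results for the limiting profile'' with the sentence ``The following regularity results can be found in \cite{DFS23}.'' Since the limiting equation \eqref{eq:LimitStefanNH} is identical to the one arising in the homogeneous Stefan problem, the paper simply imports the result from that reference without argument. Your proposal is therefore not a reconstruction of the paper's proof but an independent attempt at the underlying result from \cite{DFS23}.

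As an outline of that underlying result, your proposal is broadly reasonable but has two soft spots. First, the boundary condition $\partial_t v=\gamma_\lambda(t)\cdot\nabla v$ on $\mathcal F_1$ is a \emph{dynamic} boundary condition, not a classical oblique one: it couples $\partial_t v$ with $\nabla v$ on the lateral face, so a direct appeal to Lieberman's oblique-derivative theory for existence does not apply off the shelf and needs justification (this is in fact one of the technical contributions of \cite{DFS23}). Second, in your Step~3 you write the target decay as $\osc(v-\ell_{\bar a,\bar b})\le Cr^{1+\alpha}$, which is a $C^{1,\alpha}$ statement (this is Proposition~\ref{Prop:Interiorv}), whereas the claim here is only $C^\alpha$ in $d_\lambda$; the correct iteration is $\osc_{\mathcal B_{\lambda,r}} v\le Cr^\alpha\|\phi\|_{C^\alpha_{d_\lambda}}$ with no affine corrector. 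Your intuition about why $d_\lambda$ is the right distance and why constants must be tracked through the parabolic/hyperbolic dichotomy is correct and is indeed the mechanism in \cite{DFS23}.
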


\subsection{Comparison principle}
We establish that a solution of \eqref{eq:LinStefanNH} is close to a solution of $\eqref{eq:LimitStefanNH}$. This is a consequence of the comparison principle stated in the following proposition.
\begin{Proposition}\label{Prop:comparison1}
    Let $v \in C^{2}(\Bar{\Omega})${, with $\Omega \subset \mathcal{C}_1$,} satisfy
    \begin{equation*}
        \nor{\nabla v}, \, \nor{D^2 v} \leq M,
    \end{equation*}
    for some large constant $M$, and 
    \begin{equation}\label{eq:CompareLimitStefanNH}
        \begin{cases}
        \lambda \partial_t v \leq \tr(A_{\lambda}(t) D^2 v ) - C \delta& \text{in} \,\, \Omega,\\
     \partial_t v \leq \gamma_{\lambda} (t) \cdot \nabla v - \delta & \text{on} \,\,  \,\, \mathcal{F}_1 \cap \Bar{\Omega},
    \end{cases}
    \end{equation}
    where $A_{\lambda}(t), \gamma_{\lambda}(t)$ as above.
    Then, $v$ is a subsolution of \eqref{eq:LinStefanNH}, as long as $C${, universal,} is sufficiently large and $\varepsilon \leq \varepsilon_1(\delta, M)$. In particular, if 
    \begin{equation*}
        v \leq w \; \text{ on } \overline{\partial \Omega \setminus (\{ t=0\} \cup \{x_n = 0 \})},
    \end{equation*}
    then 
    \begin{equation*}
        v \leq w \text{ in } \Omega.
    \end{equation*}
\end{Proposition}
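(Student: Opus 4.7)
The plan is to verify directly that $v$ satisfies, in the classical sense, both inequalities of \eqref{eq:LinStefanNH} with a strict margin; the inclusion $v \le w$ in $\Omega$ then follows from the standard viscosity comparison principle applied to a classical strict subsolution versus the viscosity (super)solution $w$.

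For the interior inequality, I would substitute $v$ into the left-hand side of the first equation of \eqref{eq:LinStefanNH} and Taylor expand $A$ around $a(\lambda t)$. Since $A \in C^{2}(\mathcal R_K)$ and $\varepsilon |\nabla v| \le \varepsilon M$ is small, this yields
\[
\tr\bigl(A(a(\lambda t) + \varepsilon \nabla v)\,D^{2}v\bigr) = \tr\bigl(A_{\lambda}(t)\,D^{2}v\bigr) + O(\varepsilon M^{2}).
\]
It then remains to bound the extraneous terms in \eqref{eq:LinStefanNH} that are absent from \eqref{eq:LimitStefanNH}. Using $|a_{n}'(\lambda t)| \le \delta \varepsilon \lambda^{-2}$, $|b'(\lambda t)| = |g(a(\lambda t))| \le K$, $|a_{n}(\lambda t)| \le K$, the smallness regime $\lambda \le \delta \varepsilon$ from Proposition~\ref{Prop:ImprovFlat}, and $\|f\|_{\infty} \le \varepsilon_{1}^{2}$, the three extra terms
\[
\lambda^{2} \varepsilon^{-1} a_{n}'(\lambda t)\,x_{n},\qquad \lambda \varepsilon^{-1} b'(\lambda t),\qquad \bigl(\lambda \partial_{x_{n}} v + \lambda \varepsilon^{-1} a_{n}(\lambda t)\bigr)\,f
\]
are each of order $\delta$, with multiplicative constants depending only on $K$, $M$ and $\varepsilon_{1}$. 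Combining with the assumed margin $-C\delta$, choosing $C$ sufficiently large in terms of $K$ and $n$ together with $\varepsilon \le \varepsilon_{1}(\delta,M)$ to absorb the Taylor error $O(\varepsilon M^{2})$, gives the interior subsolution inequality.

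For the boundary condition, the crucial point is the exact cancellation $\varepsilon^{-1} b'(\lambda t) = \varepsilon^{-1} g(a(\lambda t))$ coming from the compatibility $b'(t) = g(a(t))$. This allows a Taylor expansion of $g$ without the singular prefactor $\varepsilon^{-1}$ blowing up: since $g \in C^{2}(\mathcal R_K)$,
\[
\varepsilon^{-1}\bigl(b'(\lambda t) - g(a(\lambda t) + \varepsilon \nabla v)\bigr) = -\gamma_{\lambda}(t) \cdot \nabla v + O(\varepsilon M^{2}).
\]
Substituting $v$ into the boundary condition of \eqref{eq:LinStefanNH} therefore gives $\partial_{t} v - \gamma_{\lambda}(t) \cdot \nabla v + O(\varepsilon M^{2}) \le -\delta + O(\varepsilon M^{2}) \le 0$ for $\varepsilon$ small depending on $\delta, M$. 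Once $v$ is established as a classical strict subsolution, the comparison $v \le w$ follows from a standard argument: if the supremum of $v - w$ were positive, it would be attained at a point in $\Omega \cup (\bar\Omega \cap \mathcal F_{1}) \cup (\bar\Omega \cap \{t=0\})$, and then $v$ shifted down by the maximum would touch $w$ from below, contradicting either the interior or the oblique viscosity supersolution property of $w$ in view of the strict inequalities enjoyed by $v$. I expect the main subtle step to be the interior bookkeeping: without the hypothesis $\lambda \le \delta \varepsilon$ the constant term $\lambda \varepsilon^{-1} b'(\lambda t)$ would blow up and prevent the cancellation, so this specific scaling between the small parameters is precisely what makes the linearisation procedure work.
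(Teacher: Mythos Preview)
Your proposal is correct and follows essentially the same route as the paper: Taylor-expand $A$ and $g$ around $a(\lambda t)$ to produce $O(\varepsilon M^{2})$ errors, bound the three extraneous terms in \eqref{eq:LinStefanNH} by $O(\delta)$ using $|a_n'|\le \delta\varepsilon\lambda^{-2}$, $\lambda\le\delta\varepsilon$ and $\|f\|_\infty\le\varepsilon_1^2$, and then absorb everything with $C$ large and $\varepsilon\le\varepsilon_1(\delta,M)$. Your explicit treatment of the final comparison step and your observation that the smallness $\lambda\le\delta\varepsilon$ is exactly what kills the otherwise singular term $\lambda\varepsilon^{-1}b'(\lambda t)$ are in fact slightly more detailed than the paper's own write-up.
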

An analogous result holds in the case of a supersolution, replacing $\leq$ with $\geq$ and the $-$ in \eqref{eq:CompareLimitStefanNH} by $+$. \\
\begin{proof}
    {Notice that, from the assumptions of Proposition \ref{Prop:ImprovFlat} and \eqref{eq:boundg}, we have}
    \begin{equation*}
        \lambda\delta \nor{a_n'(\lambda t)} + \delta\nor{b'(\lambda t)} \leq \delta(\lambda K +  K), 
    \end{equation*}
    \[
        \left| \lambda \partial_{x_n}v+\lambda \varepsilon_1^{-1} a_n(\lambda t) \right|\, |f(x',v,t)|\leq \left( \lambda M+\delta K\right)\|f\|_\infty,
    \]
    and 
    \begin{equation*}
        \left|\tr (A(a(\lambda t) + \varepsilon \nabla v) - A(a(\lambda t)) D^2 v )\right| \leq M K \varepsilon_1 \nor{D^2 v} \leq K \varepsilon_1 M^2,
    \end{equation*}
    since $a \in \mathcal{R}_K$. Therefore, we obtain that $v$ is a solution of the first equation of \eqref{eq:LinStefanNH} if we choose $C > 0$ large. Indeed,
  \begin{align*}
      \lambda \partial_t v & \leq \tr(A(a(\lambda t + \varepsilon_1 \nabla v))D^2 v) + K \varepsilon_1 M^2 - C \delta \leq \tr(A(a(\lambda t + \varepsilon_1 \nabla v))D^2 v) + K \varepsilon_1 M^2 - C \delta\\
      - &(\lambda^2 \varepsilon_1^{-1} a_n'(\lambda t) x_n + \lambda \varepsilon_1^{-1} b'(\lambda t))+ \delta (\lambda K + K) - (\lambda \partial_{x_n} w + \lambda \varepsilon_1^{-1} a_n(\lambda t))f + (\lambda M + \delta K) \norm{f}_{L^{\infty}}.
  \end{align*}
  Then, it is enough to choose $C$ so that
  \begin{equation*}
     C \delta \geq K \varepsilon_1 M^2 + \delta (\lambda K + K) +(\lambda M + \delta K) \norm{f}_{L^{\infty}}.
  \end{equation*}
    The same holds for the boundary condition using 
    \begin{equation*}
        \nor{g(a(\lambda t) + \varepsilon_1 \nabla v) -g(a(\lambda t)) - \varepsilon_1 \nabla g(a(\lambda t)) \cdot \nabla v } \leq C \varepsilon_1^2 M^2.
    \end{equation*}
    The second part of the statement directly descends from the standard comparison principle.
\end{proof}
Thus, it follows  that if the rescaled error $w$ is close to a $C^2$-solution $v$ of \eqref{eq:LimitStefanNH} on the Dirichlet boundary of $\Omega \subset \mathcal{C}_1$, then $v$ and $w$ remain close to each other in the whole domain $\Omega$.
\begin{Corollary} \label{Cor:comparison}
    Let $w$ be a solution of \eqref{eq:LinStefanNH} and $v \in C^{2}$ be a solution of \eqref{eq:LimitStefanNH}  in a domain $\Omega \subset \mathcal{C}_1$, with
    \begin{equation*}
        \nor{\nabla v}, \nor{D^2 v} \leq M.
    \end{equation*}
    If $\varepsilon \leq \varepsilon_1(\delta,M) $ and 
    \begin{equation*}
        \nor{v-w} \leq \sigma \quad \text{ on } \overline{\partial \Omega \setminus (\{ t=0\} \cup \{x_n = 0 \})},
    \end{equation*}
    then \begin{equation*}
        \nor{v-w} \leq \sigma + C\delta \quad \text{in } \Omega. 
    \end{equation*}
\end{Corollary}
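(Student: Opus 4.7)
I would prove the estimate by squeezing $w$ between two barriers built from $v$. The idea is to pick a smooth nonnegative auxiliary function $h$ on $\overline{\mathcal C_1}$, bounded by a universal constant and strictly sub-parabolic both for the bulk operator $\lambda\partial_t-\tr(A_\lambda D^2\cdot)$ and for the oblique boundary operator $\partial_t-\gamma_\lambda\cdot\nabla$, and define
\[
\tilde v_\pm(x,t):=v(x,t)\pm\sigma\pm C_0\delta\,h(x).
\]
Proposition~\ref{Prop:comparison1} will promote $\tilde v_+$ to a viscosity supersolution of \eqref{eq:LinStefanNH} and $\tilde v_-$ to a subsolution; the hypothesis $\nor{v-w}\le\sigma$ on the Dirichlet portion of $\partial\Omega$, together with $h\ge 0$, gives $\tilde v_-\le w\le\tilde v_+$ on that boundary, and the comparison statement built into Proposition~\ref{Prop:comparison1} propagates the ordering to all of $\Omega$.

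\textbf{Barrier and verification.} A convenient explicit choice is $h(x):=2-x_n-x_n^2$, which satisfies $0\le h\le 2$ on $\overline{\mathcal C_1}$, has $D^2h=-2\,e_n\otimes e_n$, and $\nabla h|_{x_n=0}=-e_n$. Using the uniform ellipticity $A_\lambda(t)\ge K^{-1}I$ and the positivity $\partial_{p_n}g(a(\lambda t))\ge K^{-1}$ from \eqref{eq:boundg} (valid because $a(\lambda t)\in\mathcal R_K$), a direct computation yields
\begin{align*}
\lambda\partial_t h-\tr(A_\lambda D^2h)\ge 2K^{-1}\text{ in }\mathcal C_1,\qquad \partial_t h-\gamma_\lambda\cdot\nabla h\ge K^{-1}\text{ on }\mathcal F_1.
\end{align*}
Choosing $C_0=C_0(K,M)$ so that $2C_0K^{-1}\ge C$ and $C_0K^{-1}\ge 1$, where $C$ is the constant of Proposition~\ref{Prop:comparison1} applied with $M$ replaced by $2M$, the perturbation $\tilde v_+$ therefore satisfies
\[
\lambda\partial_t\tilde v_+\ge\tr(A_\lambda D^2\tilde v_+)+C\delta\text{ in }\Omega,\qquad \partial_t\tilde v_+\ge\gamma_\lambda\cdot\nabla\tilde v_++\delta\text{ on }\mathcal F_1\cap\bar\Omega.
\]
For $\delta$ small one also has $\nor{\nabla\tilde v_+},\nor{D^2\tilde v_+}\le 2M$, so Proposition~\ref{Prop:comparison1} (used with $2M$ in place of $M$ and $\varepsilon\le\varepsilon_1(\delta,2M)$) applies and produces the supersolution property of $\tilde v_+$ for \eqref{eq:LinStefanNH}; the subsolution analog for $\tilde v_-$ is identical.

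\textbf{Conclusion and main obstacle.} On $\overline{\partial\Omega\setminus(\{t=0\}\cup\{x_n=0\})}$ we have $\tilde v_+\ge v+\sigma\ge w$ from $h\ge 0$ and the hypothesis, hence $\tilde v_+\ge w$ in $\Omega$ by comparison, which reads $w\le v+\sigma+2C_0\delta$; the symmetric argument with $\tilde v_-$ gives $w\ge v-\sigma-2C_0\delta$, and renaming $C:=2C_0$ delivers the claim. The delicate point of the argument is the design of the single barrier $h$: a pure quadratic $-x_n^2$ gives the interior strict sub-parabolicity but has vanishing gradient at $\{x_n=0\}$ and so produces no oblique-derivative gain, while a pure linear-in-$x_n$ or time correction either forces $C_0$ to scale like $\lambda^{-1}$ (which would destroy the $\lambda$-independence of the final constant) or depends on a favorable sign of $\gamma_\lambda$. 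The joint term $-x_n-x_n^2$, together with the additive constant $2$ (whose sole purpose is to keep $h\ge 0$, crucial for preserving the boundary ordering), resolves both requirements with constants depending only on $n$ and $K$.
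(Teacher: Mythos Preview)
Your argument is correct and follows the same strategy as the paper: perturb $v$ by an explicit barrier so that Proposition~\ref{Prop:comparison1} upgrades the perturbed function to a sub/supersolution of \eqref{eq:LinStefanNH}, and then compare. The only difference is the choice of barrier. The paper takes $v\pm\bigl(C\delta(x_n^2-t-2)-\sigma\bigr)$, i.e.\ uses $h(x,t)=2+t-x_n^2$; here the interior gain comes from $-x_n^2$ exactly as in your proof, while the boundary gain comes from the $+t$ term via $\partial_t h=1$ (so no information on the sign of $(\gamma_\lambda)_n$ is needed). Your barrier $h(x)=2-x_n-x_n^2$ instead extracts the boundary gain from the linear $-x_n$ term through the hypothesis $\partial_{p_n}g\ge K^{-1}$ in \eqref{eq:boundg}. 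Both choices produce universal constants and the proofs are otherwise identical; your closing remark that a time correction ``depends on a favorable sign of $\gamma_\lambda$'' is therefore slightly inaccurate, since the paper's $+t$ term works regardless of $\gamma_\lambda$.
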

\begin{proof}
    This corollary follows from Proposition \ref{Prop:comparison1} if, instead of $v$, we take
    \begin{equation*}
        v \pm \left( C \delta(x_n^2 -t -2)-\sigma \right).
    \end{equation*}
\end{proof}
    
\subsection{The oscillation decay properties of the error function $w$}\label{Section:Osc_w}

Let $w$ be given by \eqref{eq:w_error}. We want to prove the following oscillation property for the error function $w$.

\begin{Proposition} \label{prop:wharnack}
    Assume that $u$ satisfies the hypotheses of Proposition \ref{Prop:ImprovFlat} and let $w$ be given by \eqref{eq:w_error}. Then,
    \[
        \osc_{\mathcal{C}_{1/2}}w\leq 2(1-c),
    \]
    with $c$ small universal, provided that $\delta \leq K^{-1}$ and $\varepsilon \leq c_1\delta$, with $c_1$ universal.
\end{Proposition}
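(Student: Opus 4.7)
The plan is to iterate the one-sided Harnack in the $H(\varepsilon^{1/2})$ property (Definition \ref{Def:H-prop}) along a Harnack chain, propagating a strict pointwise lower bound for $w$ from a fixed early interior point to all of $\mathcal{C}_{1/2}$. Fix the base point $P_0 := (\tfrac{1}{2}e_n, -\tfrac{3}{4}) \in \mathcal{C}_1$, which lies in the single-valued region $\{x_n \geq C_0 \varepsilon_1\}$ of $w$ (cf.\ Remark \ref{Rmk:flat}). Either $w(P_0) \geq 0$, or else $w(P_0) < 0$ and we apply the same argument to $-w$; this substitution is legitimate because the extremal inequalities \eqref{eq:M+omega}--\eqref{eq:M-omega} on which the $H$-property is built are invariant under $\omega \mapsto -\omega$ (swapping $\mathcal{M}^+$ and $\mathcal{M}^-$), and the derivation yields the same universal constant. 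We may therefore assume $(w+1)(P_0) \geq 1$.

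Translating to the original variables, $w + 1 \geq 0$ becomes $u - \ell_{a(t_0),\, b(t_0) - \varepsilon\lambda} \geq 0$ modulo the time-dependent drift $(a(t)-a(t_0))\cdot x + (b(t)-b(t_0))$. In a cylinder of radius $r = c\lambda$ with $c$ a small universal constant, the drift is bounded by $Cr^2(\delta\varepsilon/\lambda + K) \leq C' c^2 \varepsilon\lambda$ using $\lambda \leq \delta\varepsilon$ and $\delta \leq K^{-1}$. Hence $\omega := u - \ell_{a(t_0),\, b(t_0) - C_1 \varepsilon\lambda}$, for a suitable $C_1 = 1 + O(c^2)$, is nonnegative in that cylinder with $\omega(\lambda P_0) \geq \varepsilon\lambda$. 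Applying $H(\varepsilon^{1/2})$ (admissible since $c \geq \varepsilon^{1/2}$ for $\varepsilon$ small) yields $\omega \geq c_H \varepsilon\lambda$ on the shifted forward cylinder, and unwinding gives $w + 1 \geq c_1$ on $Q_{c/2}(\tfrac{1}{2}e_n) \times [-\tfrac{3}{4} + c^2/2,\, -\tfrac{3}{4} + c^2] \subset \mathcal{C}_1$, with $c_1 > 0$ universal. Building a Harnack chain of length $N$, uniformly bounded by a constant depending only on $n$ (since $\mathcal{C}_{1/2}$ sits at positive $d_\lambda$-distance from the parabolic boundary of $\mathcal{C}_1$ away from $\{t=0\}$), and iterating the above step, one obtains $(w+1)(P) \geq c_1^N =: 2c_0 > 0$ for every $P \in \mathcal{C}_{1/2}$. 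Hence $w \geq -1 + 2c_0$ on $\mathcal{C}_{1/2}$; combined with $w \leq 1$ this gives $\osc_{\mathcal{C}_{1/2}} w \leq 2(1 - c_0)$, as required.

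The main obstacle is that the $H$-property is formulated for affine $\ell_{a,b}$ \emph{constant in $t$}, whereas the natural affine comparison here is the time-dependent $\ell_{a(t), b(t)}$. Absorbing the resulting drift without destroying the Harnack improvement requires a careful choice of radius $r = c\lambda$: shrinking $c$ reduces the drift but forces more iterations in the chain and a worse final constant, while taking $c$ close to $1$ lets the drift dominate and destroys the strict positivity. The hypotheses $\delta \leq K^{-1}$ and $\varepsilon \leq c\delta$ in the statement are exactly what make the balance work, yielding a universal $c$ for which the drift is a small fraction of the Harnack gap $c_H\varepsilon\lambda$ while $r \geq \varepsilon^{1/2}\lambda$ remains an admissible radius for $H(\varepsilon^{1/2})$. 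A secondary subtlety is the one-sided (forward-in-time) nature of the $H$-property: it forces the argument to start from a fixed early base point and invoke the $\pm w$ dichotomy, rather than picking $P_0$ at an extremum of $w$ as a classical two-sided Harnack argument would allow.
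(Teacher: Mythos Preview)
Your proof has a genuine gap at the fixed boundary $\{x_n=0\}$. The $H(\varepsilon^{1/2})$ property (Definition~\ref{Def:H-prop}) is an \emph{interior} Harnack statement: it requires the parabolic cylinder $Q_r(x_0)\times[t_0-r^2,t_0+r^2]$ to lie in $\mathcal{C}_\lambda$, which forces $(x_0)_n>r\ge\varepsilon^{1/2}\lambda$. Consequently any Harnack chain built from it reaches only points with $x_n\gtrsim\varepsilon^{1/2}$ in the rescaled picture, and your claim that the chain length is uniformly bounded ``since $\mathcal{C}_{1/2}$ sits at positive $d_\lambda$-distance from the parabolic boundary'' does not help: the $H$-property acts on standard parabolic cylinders, not $d_\lambda$-balls, and is blind to the oblique boundary condition on $\mathcal{F}_1$. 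You therefore obtain no control on $w$ in the strip $\{0<x_n\lesssim\varepsilon^{1/2}\}\subset\mathcal{C}_{1/2}$, where $w$ may even be multi-valued.

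The paper handles exactly this point by a different mechanism. One works with the shift $\bar w=w+1+C\delta(2+t-x_n^2)\ge0$ and compares it to $s(t)\phi(x)$, where $\phi$ solves $\Mm(D^2\phi)=0$ in a smooth domain between $Q_{3/4}^+$ and $Q_{7/8}^+$, with $\phi=\eta$ on the flat piece $\{x_n=0\}$; in particular $\phi\ge c>0$ on all of $Q_{1/2}^+$, \emph{up to the boundary}. Lemma~\ref{Lm:Barrier} then uses both the interior Harnack and the boundary condition on $\mathcal{F}_1$ to show that the inequality $\bar w\ge s(t)\phi$ propagates forward in time, with $s$ gaining $c_0\lambda$ at each $t_k=-1+k\lambda$ for which $\bar w(\tfrac12 e_n,t_k+\lambda/4)\ge\tfrac12$, and decaying by at most a factor $(1-C_0\lambda)$ otherwise. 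The $\pm w$ dichotomy ensures that at least half of the $t_k\in[-1,-\tfrac12]$ are ``good'', and iterating gives $\bar w\ge\bar c\,\phi$ in $Q_1^+\times[-\tfrac12,0]$, hence the oscillation bound on all of $\mathcal{C}_{1/2}$. The boundary condition on $\mathcal{F}_1$ enters essentially in Lemma~\ref{Lm:Barrier}; an argument based solely on the interior $H$-property cannot close.
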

In order to prove Proposition \ref{prop:wharnack}{, we consider the function $\bar w $ defined as
\begin{equation}\label{eq:w-bar}
  \bar w:=w+1+C\delta\left(2+t-x_n^2\right)\geq 0,
\end{equation}
recalling that $|w|\le 1$ in $\mathcal{C}_1$.
From \eqref{eq:LinStefanNH}{, it immediately} follows that $\bar w $ solves}
 \begin{equation} \label{eq:errorshifted}
 \begin{cases}
     \lambda \bar w_t = \tr(A(a(\lambda t) + \varepsilon(\nabla \bar w + 2C\delta x_n e_n))(D^2 \bar w + 2C\delta (e_n \otimes e_n))) - \lambda^2 \varepsilon^{-1} a_n'(\lambda t) x_n + \\
     - \lambda \varepsilon^{-1} b'(\lambda t) - \lambda (\p_{x_n} \bar w + 2C \delta x_n + \varepsilon^{-1} a_n (\lambda t)) f +C \delta \lambda  &\text{in }\mathcal{C}_1, \\
     \bar w_{t} = -  \varepsilon^{-1} g(a(\lambda t)) + \varepsilon^{-1} g(a(\lambda t) + \varepsilon(\nabla \bar w + 2C \delta x_n e_n)) +C \delta & \text{on }\mathcal{F}_1.
     \end{cases}
 \end{equation}
 \begin{Lemma} \label{Lemma:barwremark}
 Let $v \in C^{2}(\mathcal{C}_1)$, with $\nor{\nabla v}, \nor{D^2 v} \leq M$, be a viscosity solution of
  \begin{equation*}
        \begin{cases}
        \lambda \partial_t v \leq \Mm(D^2 v) & \text{in }  \mathcal{C}_1,\\
     \partial_t v \leq K^{-1} v_n^+ - Kv_n^- - K \nor{\nabla_{x'} v}  & \text{on } \mathcal{F}_1.
    \end{cases}
\end{equation*}
Then, $v$ is a subsolution of \eqref{eq:errorshifted} as long as $\delta \leq K^{-1}, \varepsilon \leq c_1 \delta$.
\end{Lemma}
\noindent\textit{Proof of Lemma~\ref{Lemma:barwremark}.~}
Exploiting the properties of Pucci operators,
 \begin{equation*}
         \Mm(D^2 v + 2C\delta e_n \otimes e_n) \geq \Mm(D^2 v) + \Mm(2C\delta e_n \otimes e_n) \geq  \Mm(D^2 v) + \frac{2C\delta}{K}.
 \end{equation*} 
Hence,
\begin{equation*}
    \begin{aligned}
        \lambda \p_t v &\leq \Mm(D^2v  + 2C\delta e_n \otimes e_n) -\frac{2C\delta}{K}\\ &\leq \tr(A(a(\lambda t) + \varepsilon( \nabla v + 2C\delta x_ne_n)) (D^2 v +  2C\delta  e_n \otimes e_n ) ) - \frac{2C\delta}{K}.
    \end{aligned}
\end{equation*}
Define the following quantity to lighten the notation
\begin{equation*}
    H(v,\p_{x_n}v) :=  -\lambda^2 \varepsilon^{-1} a_n'(\lambda t)x_n - \lambda \varepsilon^{-1} b'(\lambda t)- \lambda (\p_{x_n} v + 2C \delta x_n  + \varepsilon^{-1} a_n( \lambda  t) ) f +C\delta \lambda.
\end{equation*}
Then, as $\lambda \leq \lambda_1 \leq 1$, we have{, again from the assumptions of Proposition \ref{Prop:ImprovFlat},} that $ H(v,\p_{x_n} v ) \geq - (2K + 1 + M + C\lambda) \delta $, which implies that 
\begin{align*}
     \lambda \p_t v  &\leq \tr(A(a(\lambda t) + \varepsilon( \nabla v + 2C\delta x_ne_n)) (D^2 v + 2C \delta e_n \otimes e_n ) )  + H(v,\p_{x_n} v)  - \frac{2C\delta}{K}\\
     &+ (2K + 1 + M +  C \lambda) \delta.
\end{align*}
Choosing $\delta \leq K^{-1}$ and $C$ large enough leads to
\begin{equation*}
     \lambda \p_t v  \leq \tr(A(a(\lambda t) + \varepsilon( \nabla v + 2C\delta x_ne_n)) (D^2 v + 2C \delta e_n \otimes e_n ) )  + H(v,\p_{x_n} v) .
\end{equation*}

On the other hand, for the boundary condition, it holds, recalling that $a(t) \in \mathcal{R}_{K}$ and $g\in C^2(\mathcal{R}_{K})$,
\begin{equation*}
    g(a(\lambda t) + \varepsilon(\nabla v + 2 C \delta x_ne_n )) \geq g (a(\lambda t))+ \nabla g (a(\lambda t)) \cdot \varepsilon (\nabla v + 2C \delta x_ne_n) - \varepsilon^2 C_1 \nor{\nabla v + 2C \delta x_n e_n}^2.
\end{equation*}
So, we achieve, according to \eqref{eq:boundg},
\begin{align*}
    &\nabla g(a(\lambda t)) \cdot \nabla v \\
    & \leq\,\varepsilon^{-1} \left( g(a(\lambda t) + \varepsilon(\nabla v + 2 C \delta x_ne_n )) - g(a(\lambda t))\right) - 2C \delta x_n g_n(a(\lambda t)) + 2C_1 \varepsilon \left( \nor{\nabla v}^2 + 4 C^2 \delta^2\right)   \\ 
    &\leq \, \varepsilon^{-1}g(a(\lambda t) + \varepsilon(\nabla v + 2 C \delta x_ne_n )) - \varepsilon^{-1} g(a(\lambda t)) + \delta \left(  2KC  + C_2\varepsilon  \delta^{-1} M^2 + 4 \varepsilon C_2\delta \right),
\end{align*}
which yields, assuming $\varepsilon \leq c_1\delta$, being $K^{-1}\le g_n\le K$ in $\mathcal{R}_{K}$,
\begin{align*}
    \p_{t} v \leq \,&K^{-1} v_n^+ - Kv_n^- - K \nor{\nabla_{x'} v} \leq \nabla g ( a (\lambda t)) \cdot \nabla v \\ 
    \leq\,& \varepsilon^{-1}g(a(\lambda t) + \varepsilon(\nabla v + 2 C \delta x_ne_n )) - \varepsilon^{-1} g(a(\lambda t)) + C\delta, 
\end{align*}
where $C=C(M,K,n)$.  
\hfill $\square$

Now, we state an alternative version of \cite[Lemma 6.2]{DFS23} involving $\bar w$, which describes its evolution in time when compared to an explicit barrier. We start by introducing such barrier. Consider a smooth domain $\Omega$ in $\mathbb{R}^n$, $n \geq 2,$ such that
$$\bar Q^+_{3/4} \subset \bar \Omega \subset \bar Q^+_{7/8},$$
and call
$$T:= \{x_n=0\} \cap Q_{3/4} \subset \partial\Omega.$$ 
Define $\eta(x')$ as a standard bump function supported on $Q'_{5/8}$ and equal to 1 on $Q'_{1/2}$ (here the prime denotes cubes in $\mathbb{R}^{n-1}$).
Also, let $\phi$ satisfy
$$\mathcal M^-_{K}(D^2\phi)= 0 \quad \text{in $\Omega$},$$
$$\phi= 0 \quad \text{on $\partial \Omega \setminus T$}, \quad \phi=\eta \quad \text{on $T$},$$ and notice that $0 \leq \phi \leq 1$, $\phi \geq c$ on $Q_{1/2}^+$, and by the Hopf lemma, $\phi_n>0$ on $\{x_n=0\} \cap \{\phi=0\}$. Furthermore, since $\Mm$ is concave, in view of \cite[Chapter $9$]{CC}, we have that $\phi \in C^{2,\alpha}(\overline{\Omega})$ with universal estimates. 

\begin{Lemma} \label{Lm:Barrier}
\label{phi} Let $\bar w \geq 0$, defined as in \eqref{eq:w-bar}, solve \eqref{eq:errorshifted}
in the viscosity sense. If for some $t_0 \in (-1,0]$,
$$\bar w(x, t_0) \geq s_0 \, \, \phi(x) \quad \text{in $Q^+_1$}, \quad s_0 \geq 0,$$
then
$$\bar w(x,t) \geq s(t) \, \phi(x) \quad \text{in $Q^+_1 \times [t_0, 0],$}$$
with
$$s'(t)= - C_0 s(t), \quad s(t_0)=s_0, \quad \quad \textit{$C_0$ large universal}.$$
Moreover, if $s_0 \le c_0$ with $c_0$ small universal, and
\begin{equation*}
	\bar w \left(\frac 1 2 e_n, t_0+\lambda/4 \right) \geq \frac {1}{2},\quad {\lambda \text{ small universal}},
\end{equation*}
then 
$$\bar w(x,t_0+\lambda) \geq (s_0 + c_0\lambda) \phi(x) \quad{\text{in }Q^+_1}.$$
\end{Lemma}
Note that $\bar w$ is possibly multi-valued near $\mathcal{F}_1$. Therefore, the inequalities hold for every possible value. The proof of Lemma \ref{Lm:Barrier} is identical to \cite[Lemma 6.2]{DFS23} since it still satisfies the interior Harnack inequality, given by the $H(c'')$-property of $w$ for some $c''$ small universal constant, after a rescaling in time of size $\lambda$. This fact, together with Lemma \ref{Lemma:barwremark}, allows us to perform the comparison with the same explicit barriers constructed in the proof given by the authors. \\
We are now ready to provide a proof for Proposition \ref{prop:wharnack}. \newline

\noindent\textit{Proof of Proposition~\ref{prop:wharnack}.~}  We can assume that $w(e_n /2 , t_k + \lambda/4) \geq 0$ for more than half of the values $t_k=-1+k\lambda \in [-1,-1/2]$ because if not, we consider $-w$ satisfying an equation for which it is possible to replicate Lemma \ref{Lm:Barrier}. Therefore, it holds, according to \eqref{eq:w-bar},
 \begin{equation} \label{eq:morethanhalf}
    \bar w(e_n /2 , t_k + \lambda/4) \geq 1,\quad \textit{ for more than half of the values} \quad t_k\in [-1,-1/2].
\end{equation} 

We want to use Lemma \ref{Lm:Barrier} at each time $t_k$, with $s_0=0$ and $s_k=s(t_k)$. We begin by explaining the argument. Suppose we have proved $\bar w\geq s_k\phi(x)$ in $Q_1^+ \times [t_k,0]$. If \eqref{eq:morethanhalf} holds (good case) and $s_k \leq c_0$, then we use the second part of Lemma \ref{Lm:Barrier} to obtain the improvement
\[
\bar w (x,t_{k+1}) \geq (s_k+c_0\lambda)\phi(x)=s_{k+1}\phi(x) \qquad \textit{in }Q_1^+.
\]
On the other hand, if \eqref{eq:morethanhalf} does not hold (bad case), then, from the first part of Lemma \ref{Lm:Barrier}, we have, by explicit computations, the decrease of our control, i.e.,
\[
s_{k+1}\geq s_k(1-C_0\lambda).
\]
Since \eqref{eq:morethanhalf} holds half of the time, the final control we get on $s_k$ will depend on the order of the $t_k$'s where \eqref{eq:morethanhalf} is satisfied. Clearly, the worst case scenario is when it holds for the first half of times $t_k$. We only present the estimate in this case.

We start with $s_0=0$. Then, we apply the second part of Lemma \ref{Lm:Barrier} $\bar k$ times, where $\bar k=\lceil1/(4\lambda)\rceil$ represents half of the amount of values $t_k$, to get
\[
s_{\bar k}=\bar kc_0\lambda \ge\frac{c_0}{4}.
\]
In the remaining $\bar k$ times we are in the bad case, where we use the first part of Lemma \ref{Lm:Barrier}. We then have the final estimate on the control
\begin{align*}
    s_{2\bar k}\geq s_{\bar k}(1-C_0\lambda)^{\bar k}\geq \bar c,
\end{align*}
provided that $\lambda<
1/C_0$. Thus, this lemma implies
\begin{align*}
    \bar w(x,t)>\bar c\, \phi(x),\quad  (x,t) \in Q^+_1\times[-1/2,0].
\end{align*}
which gives the desired estimate since $\phi>c$ in $Q_{1/2}^+$.
\hfill $\square$

By virtue of Proposition \ref{prop:wharnack}, we can prove that $w$ has an \textit{almost} H\"older-modulus of continuity, which will play a key role in the improvement of flatness.
\begin{Proposition}[Almost H\"older continuity for $w$] \label{Pro:w_holder}
    Assume that $u$ satisfies the hypotheses of Proposition \ref{Prop:ImprovFlat} and the error $w$ is always defined by \eqref{eq:w_error}. Then, for any $(x_0,t_0) \in \mathcal{C}_{1/2}$, it holds
    \begin{equation*}
        \osc\limits_{\mathcal{B}_{\lambda,r}(x_0,t_0)} w \leq C r^{\alpha}
    \end{equation*}
    for $r > C(\delta) \varepsilon^{1/2}$, with $\delta \leq K^{-1}$ and $\varepsilon \leq c_1 \delta$.
\end{Proposition}
 The proof follows from Proposition \ref{prop:wharnack} as in the homogeneous case. For the sake of completeness, we provide the details of the proof, which relies on the metric $d_\lambda$. \\
\begin{proof}
Note that applying Proposition \ref{prop:wharnack} to $u$, by \eqref{eq:w_error}, we have 
\begin{equation*}
    	(c_1^{(1)}-1)\varepsilon \lambda \leq u-l_{a,b} \leq \varepsilon \lambda(1-c_2^{(1)})\quad\text{in }\mathcal{C}_{\lambda/2},
    	\end{equation*}
with $c_1^{(1)},c_2^{(1)} \geq 0$, such that $2 c= c_1^{(1)}+c_2^{(1)}$ where $c$ is the one given by Proposition \ref{prop:wharnack}.
If we consider 
\begin{equation*}
    \tilde w_1 \left( \frac{2x}{\lambda}, \frac{2t}{\lambda}\right):= \frac{(u - \ell_{a,b})(x,t)}{\varepsilon \lambda} -K_1,\quad (x,t)\in \mathcal{C}_{\lambda/2},
\end{equation*}
where $K_1:=\frac{1}{2} (c_1^{(1)} - c_2^{(1)})$, it follows that $-1+c \leq \tilde w_1 \leq 1- c$. Now, defining  $ w_1:= \frac{\tilde w_1}{1-c}$, we have $w_1 \in (-1,1)$ and according to \eqref{eq:w_error} we can write $w_1$ as

\begin{equation*} 
    w_1\left( \frac{2x}{\lambda}, \frac{2t}{\lambda}\right) = \frac{(u - \ell_{a,b})(x,t) - \varepsilon\lambda K_1}{\lambda \varepsilon (1- c)} = \frac{(u - \ell_{a,b_1})(x,t)}{\lambda_1 \varepsilon_1}, \quad (x,t) \in \mathcal{C}_{\lambda/2},
\end{equation*}
where we have called
\[
\quad \varepsilon_1 := 2(1- c)\varepsilon, \quad \lambda_1:= \frac{\lambda}{2}, \quad b_1:=b+\varepsilon \lambda K_1. 
\]
We are in place to invoke again Proposition \ref{prop:wharnack}, which implies
\[
 \osc_{\mathcal{C}_{1/2}} w_1 \leq 2(1-c)
\]
if $\varepsilon_1 \leq c_1 \delta$ and $2\varepsilon^{1/2} \leq c''$, for some $c''$ small universal for which it is possible to apply Lemma \ref{Lm:Barrier}, since the $H(\varepsilon^{1/2})$-property for $u$ in $\mathcal{C}_{\lambda}$ implies the $H(2\varepsilon^{1/2})$-property for $u$ in $\mathcal{C}_{\lambda/2}$.
In order to iterate this procedure, we introduce
\begin{equation*}
    w_k\left( \frac{2^k x}{\lambda}, \frac{2^k x}{\lambda}\right) := \frac{(u - \ell_{a,b_k})(x,t)}{\varepsilon_k \lambda_k} \quad (x,t) \in C_{\lambda_k}
\end{equation*}
defining 
\[
\varepsilon_k:=2^k(1-\ c)^k \varepsilon, \quad \lambda_k:= \frac{\lambda}{2^k}, \quad b_k:= b_{k-1} + \varepsilon_{k-1}\lambda_{k-1} K_k,
\]
where $K_k:=\frac{1}{2}( c_1^{(k)}-c_2^{(k)})$, with $c_1^{(k)}, c_2^{(k)} \geq 0, \, c_1^{(k)} + c_2^{(k)}=2c$, given by the relation
\[
(-1 +c_1^{(k)}) \varepsilon_{k-1} \lambda_{k-1} \leq u - \ell_{a,b_{k-1}} \leq (1 -c_2^{(k)}) \varepsilon_{k-1} \lambda_{k-1}, \quad (x,t) \in C_{\lambda_k}.
\]
Thus, we can iterate Proposition \ref{prop:wharnack} applied to $w_k$ as long as $2^k (1- c) \leq c_1 \delta$ and $2^k \varepsilon^{1/2} \leq c''.$ In terms of $w$, this reads as its oscillation in $\mathcal{C}_{2^{-k}}$ is bounded by $2(1 - c)^{k}$. Indeed
\begin{align*}
2(1-c) &\geq \osc_{\mathcal{C}_{1/2}} w_k = \osc_{\mathcal{C}_{\lambda/2^k}} \left(\frac{u-\ell_{a,b_k}}{\varepsilon_k \lambda_k} \right) =  \osc_{\mathcal{C}_{\lambda/2^k}} \left(\frac{u-\ell_{a,b}}{\varepsilon_k \lambda_k} \right) \\
&=\osc_{\mathcal{C}_{1/2^k}} w\left(\frac{\varepsilon \lambda}{\varepsilon_k \lambda_k} \right) = \osc_{\mathcal{C}_{1/2^k}} w \frac{1}{(1- c)^k} 
\end{align*}

We now distinguish two cases. If $\nor{(x_0)_n} \leq r$, then the construction of $\mathcal{B}_{\lambda,r}(x_0,t_0)$, see \eqref{eq:balls-d_lambda}, coincides with $\mathcal{C}_{r}(x_0,t_0)$. So, the iteration leads to the desired result if $r\geq C(\delta)\varepsilon^{1/2}$, where $C(\delta)$ satisfies $2^{k_0}\le C(\delta)\le 2^{k_0+1}$. Otherwise, for interior balls  $\mathcal{B}_{\lambda,r}(y,s)$, the same diminishing behavior of the oscillation holds by using the $H(\sigma)$-property for $w$ directly, as discussed in Subsection \ref{The inhomogeneousHproperty}.
 \end{proof}
\begin{Lemma}[$w$ is close to a H\"older function] \label{w_near_holder}
For every $\bar \varepsilon > 0$, there exists $\bar \delta >0$ such that if $w$ satisfies Proposition \ref{Pro:w_holder} for $r \geq \bar \delta$, $-1\leq w \leq 1$, its graph is compact in $\mathbb{R}^{n+2},$ and \(w\) is single-valued for $x_n \geq \bar \delta,$ then there exists a (single-valued) function $\phi \in C_{d_{\lambda}}^{\alpha}( {\overline{\mathcal{C}_{1/2}}})$ such that 
\begin{equation*}
    \nor{w- \phi} \leq \bar \varepsilon, \qquad \norm{\phi}_{C_{d_{\lambda}}^{\alpha}( {\overline{\mathcal{C}_{1/2}}})} \leq C.
\end{equation*}
\end{Lemma}
In order to obtain this result, we report the subsequent lemma, which is a slight variation of \cite[Lemma 9.1]{KS} for multi-valued functions with compact graphs.  In the following, $d_{\mathcal{H}}(\cdot,\cdot)$ will denote the Hausdorff distance of two nonempty sets, defined as 
\begin{equation*}
    d_{\h}(K_1,K_2):= \max \left\{  \sup_{ y\in K_1} d( y, K_2), \sup_{ y\in K_2} d( y, K_1) \right\},
\end{equation*}
where $K_1,K_2 \in \R^{n}$.
Moreover, if $(X,d)$ is a metric space, $\mathcal{K}(X)$ denotes the set of all nonempty compact subsets of $X$. 
Furthermore, if $X$ is compact, then so is $\mathcal{K}(X)$.

\begin{Lemma} \label{Lm:w_approx}
    Let $\{w_k\}_{k \geq 1}$ be a sequence of possibly multi-valued continuous functions on  $\overline{\mathcal{C}_{1/2}} \subset \mathbb{R}^{n+1}$ so that:
    \begin{itemize}
        \item[(i)] $\nor{w_k} \leq 1$ for all $k \geq 1$; 
        \item[(ii)] there exists a sequence $\delta_k \rightarrow 0$ as $k \rightarrow \infty$ such that, for each $k \geq 1$,  $w_k$ is single-valued in the region where $x_n \geq \delta_k$ and there exists a modulus of continuity $\omega : [0,\infty) \rightarrow [0,\infty) $ for which
        \begin{equation*} \osc\limits_{{\mathcal{B}_{\lambda,d_p((x,t),(y,s))}(x,t)}} w_k \leq  \omega(d_p((x,t),(y,s))),
        \end{equation*}
        for all  $(x,t),(y,s) \in \overline{\mathcal{C}_{1/2}}$  such that $ d_p((x,t),(y,s)) \geq \delta_k$. 
    \end{itemize}
    For any $k \geq 1$, let $G(w_k):=\left\{ (x,t,x_{n+1}) :\,\,(x,t) \in \overline{\mathcal{C}_{1/2}}, \,\, x_{n+1} \in w_{k}(x,t) \right\}$ be the graph of $w_k$ in $\mathbb{R}^{n+2}$. Then, there exists a single-valued function $\phi : \overline{\mathcal{C}_{1/2}}\rightarrow [-1,1]$ such that 
    \begin{equation*} \label{eq:Lm:w_approx}
        d_{\mathcal{H}}\left(G(w_k), G(\phi) \right) \rightarrow 0 \quad \text{for }k \rightarrow \infty.
    \end{equation*}
    Moreover, for all  $(x,t),(y,s) \in \overline{\mathcal{C}_{1/2}}$, we have
    \begin{equation*}
        \nor{\phi(x,t)-\phi(y,s)} \leq \omega\left( d_p((x,t),(y,s))\right).
    \end{equation*}
\end{Lemma}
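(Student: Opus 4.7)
The strategy is an Arzelà--Ascoli-type argument carried out at the level of graphs, viewed as compact subsets of the compact ambient set $X := \overline{\mathcal{C}_{1/2}} \times [-1,1] \subset \mathbb{R}^{n+2}$, equipped with the Hausdorff distance $d_{\mathcal{H}}$. Note that, since the lemma is ultimately applied in a contradiction argument (cf. Lemma \ref{w_near_holder}), the conclusion is to be read modulo passage to a subsequence, which is all that is needed for the application.

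First, by the Blaschke selection theorem the hyperspace $(\mathcal{K}(X), d_{\mathcal{H}})$ is itself compact. Since each $G(w_k)$ lies in $\mathcal{K}(X)$ by $(i)$ and the compactness assumption on the graphs, we may extract a subsequence (relabelled) along which $G(w_k) \to \Gamma$ in Hausdorff distance, for some compact $\Gamma \subset X$.

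The main step is then to identify $\Gamma$ as the graph of a single-valued continuous function $\phi \colon \overline{\mathcal{C}_{1/2}} \to [-1,1]$. Surjectivity of the base projection is immediate: given $(x,t)$, pick $a_k$ with $(x,t,a_k) \in G(w_k)$, extract a convergent subsequence $a_k \to a$, and obtain $(x,t,a) \in \Gamma$ from Hausdorff convergence. For single-valuedness, suppose $(x,t,a),(x,t,b) \in \Gamma$ and approximate by $(x_k,t_k,a_k),(y_k,s_k,b_k) \in G(w_k)$ with $(x_k,t_k),(y_k,s_k) \to (x,t)$, $a_k\to a$, $b_k\to b$. Setting
\[
r_k := \max\bigl\{ d_p((x_k,t_k),(y_k,s_k)),\ \delta_k \bigr\} \longrightarrow 0,
\]
one checks that for $k$ large both approximating points belong to the $d_\lambda$-ball $\mathcal{B}_{\lambda,r_k}(x_k,t_k)$ and $r_k \geq \delta_k$, so hypothesis $(ii)$ — with the oscillation understood across all determinations of the multi-valued $w_k$ — yields $|a_k-b_k| \leq \omega(r_k) \to 0$, whence $a=b$. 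Define $\phi(x,t)$ to be this unique value. The inherited modulus of continuity is then obtained by the same recipe applied to two distinct base points $(x,t),(y,s)$: for $k$ large the separation exceeds $\delta_k$ and passage to the limit in $(ii)$ gives $|\phi(x,t)-\phi(y,s)| \leq \omega(d_p((x,t),(y,s)))$. Continuity of $\phi$ on the compact set $\overline{\mathcal{C}_{1/2}}$ forces $G(\phi)$ to be compact and necessarily equal to $\Gamma$, so $d_{\mathcal{H}}(G(w_k),G(\phi)) \to 0$.

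The delicate point is single-valuedness at boundary points $\{x_n=0\}$, where the $w_k$ are genuinely multi-valued and the usual argument for continuous single-valued functions does not apply. The resolution is that the oscillation estimate in $(ii)$ is formulated on $d_\lambda$-balls, which controls the total vertical spread of $w_k$ including all determinations simultaneously; combined with $\delta_k \to 0$, this quantitative decay forces any two vertical accumulation points of the graphs above a given $(x,t)\in\{x_n=0\}$ to coincide, so the limiting graph $\Gamma$ is a genuine single-valued function throughout $\overline{\mathcal{C}_{1/2}}$.
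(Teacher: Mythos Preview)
Your proof is correct and follows essentially the same approach as the paper: extract a Hausdorff-convergent subsequence of graphs via Blaschke selection, then show the limit set is the graph of a single-valued function with the inherited modulus of continuity by using hypothesis~(ii). Your execution is slightly more streamlined---you handle all base points (including $x_n=0$) uniformly via $r_k=\max\{d_p((x_k,t_k),(y_k,s_k)),\delta_k\}$, whereas the paper splits into the cases $d_p\geq\delta_k$ versus $d_p<\delta_k$ (inserting an intermediate point in the latter) and defines $\phi$ on $\{x_n=0\}$ separately as a limit from the interior---but the underlying mechanism is identical.
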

\noindent\textit{Proof of Lemma~\ref{Lm:w_approx}.~}
  By the continuity of $w_k$ and $(i)$, we have that $G_k\equiv G(w_k)$ is a compact subset of $\overline{\mathcal{C}_{1/2}} \times [-1,1]$. Since $\mathcal{K}(\overline{\mathcal{C}_{1/2}} \times [-1,1])$ is a compact metric space with the Hausdorff distance, there exists a compact set $K\subset \overline{\mathcal{C}_{1/2}} \times [-1,1]$ such that
\begin{equation}\label{eq:convhauss}
d_{\h}(G_k, K)\to 0 \quad\hbox{as}~ k \to \infty,
\end{equation}
up to a subsequence.

Next, we show that $K$ must be the graph of some (single-valued) function. If not, there exist two points $(x,t,p), (x,t,q) \in K$ with $p\neq q$. By \eqref{eq:convhauss}, for all $\varepsilon>0$, there is $(k_0)_{\varepsilon}\geq 1$ such that $d((x,t,p), G_k) \leq \varepsilon$ for all  $k\geq (k_0)_{\varepsilon}$ (here, $d$ denotes the distance in $\mathbb{R}^{n+2}$).
In particular, being $G_k$ compact, the distance is attained at some point $(x_k,t_k, p_k)\in G_k$. Hence,
\begin{equation*}
(x_k,t_k, p_k) \to (x,t,p) \quad\ \hbox{as}~ k \to \infty.
\end{equation*}
Similarly, there are points $(y_k,s_k, q_k)\in G_k$ such that
$$
(y_k, s_k , q_k) \to (x,t, q) \quad\ \hbox{as}~ k \to \infty.
$$
Consider first the case in which $x_n > 0$. Choosing $k$ large enough, we can always assume that $(x_k)_n \geq \delta_k$. We distinguish two cases. If  $d_p((x_k,t_k),(y_k,s_k))\geq \delta_k$  for infinitely many $k$'s, then by $(ii)$, we get
\begin{align*}
|p_k-q_k| & \leq  \osc_{\substack{\mathcal{B}_{\lambda,d_p((x_k,t_k),(y_k,s_k))}(x_k,t_k)}} w_k \leq  \omega (d_p((x_k,t_k),(y_k,s_k))).
\end{align*}
Letting $k\to\infty$, it holds that $p=q$, which is a contradiction.
Otherwise, for sufficiently large $k$, there exists some $(z_k,r_k) \in \overline{\mathcal{C}_{1/2}} $ such that $\delta_k \leq {d_p((x_k,t_k),(z_k,r_k))}\leq 3\delta_k$ and $\delta_k \leq {d_p((z_k,r_k),(y_k,s_k))}\leq 3\delta_k$.
Then,
\begin{align*}
|p_k-q_k| \leq \osc_{\substack{\mathcal{B}_{\lambda,{d_p((x_k,t_k),(z_k,r_k))}}(x_k,t_k)}} w_k  + \osc_{\substack{\mathcal{B}_{\lambda,{d_p((z_k,r_k),(y_k,s_k))}}(y_k,s_k)}} w_k \leq 2 \omega (3\delta_k){.}
\end{align*}
Letting $k\to\infty$, we again reach a contradiction.

Observe that for any $(x,t) \in \overline{\mathcal{C}_{1/2}}$, there is some $p\in [-1,1]$ such that $(x,t,p)\in K$. Indeed, let $(x,t)\in \overline{\mathcal{C}_{1/2}}$ and consider $(x,t,w_k(x,t))\subset G_k$. By \eqref{eq:convhauss}, for all $\varepsilon>0$, there exists ${(k_0)_{\varepsilon}}\geq 1$ such that ${d((x,t,w_k(x,t)), K)} \leq \varepsilon$ for all $k\geq {(k_0)_{\varepsilon}}$. Taking $\varepsilon= 1/j$, by a diagonalization argument, we obtain that there is a subsequence of $\{(x,t,w_k(x,t))\}_{k\geq 1}$ converging to $(x,t,p)\in K$, where $|p|\leq 1$ by $(i)$.\\
We define the function $\phi: \overline{\mathcal{C}_{1/2}}\to [-1,1]$ as $\phi(x,t):=p$, if $x_n > 0$.
Let $(x,t){\neq} (y,s) \in \overline{\mathcal{C}_{1/2}}$. Then,
$$
|\phi(x,t)-\phi(y,s)| \leq \lim_{k\to\infty} \osc_{\substack{\mathcal{B}_{\lambda,\delta}(x,t)}} w_k \leq \omega({\delta}),
$$
where we used that $d_p((x,t),(y,s))=:\delta \geq \delta_k>0$ for $k$ large enough. This proves that $\phi $ is uniformly continuous with a modulus $\omega.$
 To also include the case $x_n=0$, we point out that $w_k(x',\varepsilon,t) \rightarrow \phi(x',\varepsilon,t)$ as $k \rightarrow \infty$ for every $\varepsilon >0$. Being $\phi$ bounded in $\mathcal{C}_{1/2}$ {by definition}, we define $\phi(x',0,t):= \lim_{\varepsilon \rightarrow 0^+} \phi(x',\varepsilon,t)$, where the limit exists up to a subsequence.
\hfill $\square$

\noindent\textit{Proof of Lemma~\ref{w_near_holder}.~}
	We argue by contradiction by assuming that there exist sequences of $\delta_k \rightarrow 0$ and $w_k$ satisfying the assumption with $\delta_k$ instead of $\bar \delta$. Therefore, there exists $\varepsilon_0 > 0$ such that 
\begin{equation*}
    \norm{w_k - \phi}_{L^{\infty}(\mathcal{C}_{1/2})} >\varepsilon_0
\end{equation*}
holds for every $\phi \in C_{d_{\lambda}}^{\alpha}( \mathcal{C}_{1/2})$.
However, this immediately leads to a contradiction by applying Lemma \ref{Lm:w_approx}.
\hfill $\square$

\section{Proofs of the main results}
\label{Section:C1a}
\noindent\textit{Proof of Proposition~\ref{Prop:ImprovFlat}.~}
Since the limiting problem, \eqref{eq:LimitStefanNH}, coincides with the one in the homogeneous case, this proof is identical to the one presented in \cite{DFS23}, having obtained the analogous results for the error function $w$ in the inhomogeneous case, see Subsection \ref{Section:Osc_w}. Anyway, for the sake of completeness, we include the proof. We proceed in two steps.\\ 
    \textbf{Step 1. } 
    First, we prove that there exists a function $v$ solving \eqref{eq:LimitStefanNH}, which approximates well $w$ in $\mathcal{C}_{1/2}$, that is,
    \begin{equation}\label{eq:v_approx_w}
        \nor{v-w} \leq C \delta \quad \text{in } \mathcal{C}_{1/2},
    \end{equation}
    as long as $\varepsilon \leq \varepsilon_1(\delta)$.\\
    To this end, we want to apply Lemma \ref{w_near_holder} to $w$ with $\bar \varepsilon=\delta$. We thus take $\varepsilon$ so small that $\varepsilon^{1/2}\leq\bar \delta$, only depending on $\delta$ and universal constants, with $\bar \delta$ given in Lemma \ref{w_near_holder}. We remark that our assumptions imply that we are in the conditions of the lemma, which guarantees the existence of a (single-valued) function $\phi \in C_{d_{\lambda}}^{\alpha}( {\overline{\mathcal{C}_{1/2}}})$ such that
\begin{equation}\label{eq:phi_approx_w}
    \nor{w- \phi} \leq \delta, \qquad \norm{\phi}_{C_{d_{\lambda}}^{\alpha}( {\overline{\mathcal{C}_{1/2}}})} \leq C.
\end{equation}
In view of Proposition \ref{Prop:Dirichletv}, let $v$ be the solution of \eqref{eq:LimitStefanNH} in $\mathcal{C}_{1/2}$ with $v = \phi $ on $\p_D \mathcal{C}_{1/2}$, satisfying
\begin{equation}\label{eq:estim_Holder-norm_v}
	\norm{v}_{C_{d_{\lambda}}^{\alpha}( \overline{\mathcal{C}_{1/2}})} \leq C.
\end{equation}
From this, together with \eqref{eq:phi_approx_w}, if $d_\lambda((x,t), \p_D \mathcal{C}_{1/2}) \leq \delta^{1/\alpha}$, then there exists $(y,s) \in \p_D \mathcal{C}_{1/2}$ for which
{\begin{align*}
	&\nor{v(x,t)- \phi(y,s)}\leq C_1 \delta,\\
    &\nor{w(x,t)- \phi(y,s)} \leq \nor{w(x,t)- \phi (x,t)}+\nor{\phi (x,t)- \phi (y,s)}\leq \delta + C d_\lambda((x,t), (y,s))^{\alpha} \leq C_2 \delta.
\end{align*}}
Therefore, we have
\begin{equation}\label{eq:v_approx_w_aux}
|v-w|\leq C\delta\quad \text{in }\mathcal{C}_{1/2} \cap \{(x,t):\,\,d_\lambda((x,t), \p_D \mathcal{C}_{1/2}) \leq \delta^{1/\alpha}\} .
\end{equation}
In particular, for $\Omega := \mathcal{C}_{1/2} \cap \{d_\lambda((x,t)), \p_D \mathcal{C}_{1/2}) > \delta^{1/\alpha} \} $, it holds
\[
|v-w|\leq C \delta \quad \textit{ on } \,{\partial_D \Omega}.
\]
Using the estimates of Proposition \ref{Prop:Dirichletv}, we obtain $\nor{\nabla v }, \nor{D^2 v} \leq C(\delta)$ in $\Omega$, recalling \eqref{eq:estim_Holder-norm_v}. Consequently, by Corollary \ref{Cor:comparison}, it holds
\begin{equation*}
     \nor{v- w} \leq C \delta \quad \textit{in } \Omega,
\end{equation*}
which proves \eqref{eq:v_approx_w}, together with \eqref{eq:v_approx_w_aux}. \\
\textbf{Step 2.} Now, applying Proposition \ref{Prop:Interiorv} to the solution $v$ above, we achieve
\begin{equation*}
    \nor{w-\ell_{\bar a, \bar b}} \leq \nor{w-v}+ \nor{v-\ell_{\bar a, \bar b} } \leq C \delta+\bar C\rho^{1+\alpha}\quad \text{in } \mathcal{C}_{\rho},\quad {0<\rho\le \frac{1}{2}}.
 \end{equation*}
{We then choose $\tau$ small and universal in such a way that if $\rho=\tau$ and $\delta= \tau^{1+\alpha/2}$, we have $\delta \leq c'$, with $c'$ given by Proposition \ref{Pro:w_holder}, and} 
\begin{equation*}
    \nor{w- \ell_{\bar a, \bar b}} \leq \frac{1}{4}\tau \quad \text{in }\mathcal{C}_{\tau}, \quad \nor{\bar a_n'} \leq \frac{1}{4}\tau^{-2} \delta \lambda^{-1}.
\end{equation*}
{Going back to the original function $u$, see \eqref{eq:w_error}, we get
\begin{equation*}
	\nor{(u - \ell_{a, b})(\lambda x,\lambda t) - \varepsilon \lambda \ell_{\bar a, \bar b}(x,t)}\leq \frac{\varepsilon}{4}\tau \lambda \quad \text{in } \mathcal{C}_{\tau}, 
\end{equation*}
which gives
\begin{equation*}
	\nor{(u - \ell_{a, b})(x,t) - \varepsilon \lambda \ell_{\bar a, \bar b}\bigg(\frac{x}{\lambda}, \frac{t}{\lambda}\bigg)}\leq \frac{\varepsilon}{4}\tau \lambda \quad \text{in } \mathcal{C}_{\tau\lambda}. 
\end{equation*}}
Setting 
\begin{equation*}
    \tilde a(t):= a(t) + \varepsilon \bar a \left( \frac{t}{\lambda} \right), \quad \hat b(t) := b(t) + \varepsilon \lambda \bar b\left( \frac{t}{\lambda} \right),
\end{equation*}
we can rewrite the previous inequality as
\begin{equation*}
    \nor{u - \ell_{\tilde a, \hat b}} \leq \frac{\varepsilon}{4}\tau \lambda \quad \text{in } \mathcal{C}_{\tau \lambda}, \quad \nor{\tilde a_n'} \leq \frac{\varepsilon \delta }{\lambda^2 } \left( 1 + \frac{1}{4\tau^2}\right) \leq \frac{\varepsilon \delta}{2 (\tau \lambda)^2}.
\end{equation*}
To have a control on the behavior of $b$, we define $\tilde b$ as the solution of the ODE
\begin{equation*}
    \tilde b' = g(\tilde a), \quad \tilde b(0) = \hat b(0),
\end{equation*}
for which we have
\[
\hat b'(t)=b'(t)+\varepsilon\bar b'\left(\frac{t}{\lambda}\right)=g(a(t))+\varepsilon\nabla g(a(t))\cdot \bar a\left(\frac{t}{\lambda}\right)=g(\tilde a(t))+O(\varepsilon^2)=\tilde b'(t)+O(\varepsilon^2).
\]
Therefore, if $t\in [-\tau \lambda,0],$ then
\[
|(\tilde b- \hat b)(t)|\leq C\varepsilon^2|t|\leq C\varepsilon^2\tau \lambda.
\]
This allows us to estimate
{\begin{align*}
    \nor{u-\ell_{\tilde a, \tilde b}}&=\nor{u-\tilde a(t)\cdot x-\tilde b(t)}\\
    &\leq\,\nor{u-\tilde a(t)\cdot x- \hat b(t))}+\nor{\tilde b(t) - \hat b(t)}\\
    &\leq\, \frac{\varepsilon}{4}\tau \lambda+C\varepsilon^2\tau \lambda\\
    &\leq \, \frac{\varepsilon}{2}\tau \lambda
\end{align*}}
for $\varepsilon\leq \varepsilon_1$ small enough, as intended, concluding the proof.
\hfill $\square$
\vspace{5pt}

\noindent\textit{Proof of Theorem~\ref{Theorem:flatsolution}.~}
    As already pointed out in Remark \ref{Rmk:flatimpliesreg}, Theorem \ref{Theorem:flatsolution} is equivalent to obtaining $C^{1,\alpha} $-estimates up to the boundary $\{x_n = 0 \}$ for the hodograph transform. In the same remark, we highlighted how the hypotheses of Theorem \ref{Theorem:flatsolution} translate into Proposition \ref{Prop:ImprovFlat}. In particular, we can take $\lambda \leq \min\{\delta \varepsilon_1, \lambda_1\}$ and assume that $a_0(t)=(0,\hdots, 0, (a_0)_n(t))\in \mathcal{R}_{K/2} $.\\
    We can thus iterate Proposition indefinitely \ref{Prop:ImprovFlat} in cylinders {$\mathcal{C}_{\lambda_k}$ with $\lambda_k:=\lambda \tau^k$} and  $\varepsilon=\varepsilon_k:=\varepsilon_1 2^{-k}.$ If we write $k=(\log_\tau \lambda_k - \log_\tau \lambda),$ then
 \begin{equation*}
     {\varepsilon_k=\varepsilon_1 \left( \frac{\lambda_k}{\lambda} \right)^{-\log_{\frac{\lambda_k}{\lambda}}2\,\cdot\,\log_{\tau}\frac{\lambda_k}{\lambda}}}=\varepsilon_1 \left( \frac{\lambda_k}{\lambda} \right)^{-\frac{1}{\log_2 \tau}} = \varepsilon_1 \lambda^\frac{1}{\log_2 \tau} \lambda_{k}^{-\frac{1}{\log_2 \tau}}=: C(\lambda) \lambda_k^{\alpha}.
 \end{equation*}
 Furthermore, the given coefficients $a_k(t)$ are still in $\mathcal{R}_{K/2}$, for every $k$,  since it holds
 \[
 \nor{a_k(t)-a_0(t)}\leq \sum_{j=1}^k\nor{a_j(t)-a_{j-1}(t)}\leq C\sum_{j=1}^k\varepsilon_{{j-1}}\leq C\varepsilon_1.
 \]
In addition,
\begin{equation}\label{eq:approx_u_Thm_1.1}
|u-\ell_{a_k,b_k}|\leq \varepsilon_k\lambda_k= C(\lambda)\lambda_k^{1+\alpha} \quad \text{in } \mathcal{C}_{\lambda_k}
\end{equation}
for all $k\geq 0$, which implies pointwise $C^{1,\alpha}$-regularity in space at $0$.

We also have{, recalling the definition of $\lambda_k$,}
\begin{align*}
    \nor{a_{k+n}{(t)}- a_k{(t)}} &\leq \sum_{j=1}^{n} \nor{a_{k+j}{(t)} - a_{k+j-1}{(t)}} \leq C \lambda^{\alpha} \sum_{j=0}^{n-1} \tau^{({k+j}) \alpha}  \\
    &\leq  C \lambda^{\alpha}\tau^{k \alpha}  \sum_{j=0}^{\infty} \tau^{ j \alpha} = C \lambda_k^{\alpha} \frac{1}{1-\tau^{\alpha}} \leq C \lambda_k^{\alpha},
\end{align*}
for $\tau \leq 2^{-\alpha}$.
So, $a_k{(t)}$ is a Cauchy sequence, and for $k\rightarrow \infty$, it holds $a_k(t)\rightarrow a_\infty{(t)}$ with $a_\infty{(t)}= \nabla u (0,t)$. Then, 
\begin{equation*}
	\nor{\nabla u (0,t) - a_k(t)} \leq \sum_{j={k}}^{\infty} \nor{a_j(t) - a_{j+1}(t)} \leq C \lambda_k^{\alpha}.
\end{equation*}

Now, if $t,s$ are such that $\nor{t-s}=\lambda_k^2$, we get, from \eqref{eq:approx_hodog_linear_const_time},
\begin{align*}
    \nor{\nabla u (0,t) - \nabla u (0,s)} &\leq   \nor{\nabla u (0,t) - a_k (s)} + \nor{\nabla u (0,s) - a_k (s)} \\
   &\leq C \lambda_k^{\alpha}+2\varepsilon_{1}\lambda_k = C_1\nor{t-s}^{\alpha/2}.
\end{align*}
Thus, for any $t,s \in [-\lambda_k, 0]$,
\begin{equation}\label{eq:01}
    \begin{aligned} 
    \nor{a_k(t)-a_k(s)} \leq \,&\nor{a_k(t) - \nabla u (0,t)} + \nor{\nabla u (0,t) -\nabla u (0,s)} + \nor{\nabla u (0,s)- a_k(s)}\\
    \leq\,&  C\lambda_k^\alpha + {C_1}\lambda_k^{\alpha/2}+C\lambda_k^\alpha  \leq {C_2}\lambda_k^{\alpha/2}.
\end{aligned}
\end{equation}
Being $b_k'{(t)}=g(a_k{(t)}) \in C^{1}$, by  
\eqref{eq:boundg} and \eqref{Hp:condit-assum-flat}, we have
{\begin{equation*}
    \nor{b''(t)} \leq \bar C \lambda_k^{\alpha-2},
\end{equation*}}
which yields, with $t \in (t_0 - \lambda_k^2, t_0 + \lambda_k^2)$, we get
\begin{equation*}\
    \nor{b_k(t)- b_k(0) - b_k'(0) t} \leq  C_3 \lambda_k^{\alpha+2}.
\end{equation*}
Finally, exploiting this and \eqref{eq:01}, we reach, according to \eqref{eq:approx_u_Thm_1.1},
\begin{align*}
    \nor{u-(a_k(0) \cdot x +b_k'(0)t + b_k(0))} &\leq \nor{u - \ell_{a_k,b_k} }  + \nor{  \ell_{a_k,b_k}  -  \ell_{a_k,b_k}{(x,0)} - b_k'(0)t} \\
    & \leq  C \lambda_k^{1+\alpha} + \nor{(a_k(t)-a_k(0))\cdot x } + \nor{b_k(t)- b_k(0) - b_k'(0) t}  \\
    & \leq  C \lambda_k^{1+\alpha} +  {C_2} \lambda_k^{1+\alpha/2} + {C_3} \lambda_k^{1+\alpha/2}\leq  C(\lambda) \lambda_{k}^{1+\alpha/2}\quad \text{in } C_{\lambda_k}.
\end{align*}
\hfill $\square$

\section*{Acknowledgements}
This research is partially supported by PRIN 2022 7HX33Z - CUP J53D23003610006 and by University of Bologna funds
\say{Attività di collaborazione con università del Nord America} in the framework of the project:
 \say{Interplaying problems in analysis and geometry}. F.F. and D.G. are also partially supported by  INDAM-GNAMPA project 2025: \say{At The Edge Of Ellipticity} - CUP E5324001950001. 
 N.F. is partially supported by  INDAM-GNAMPA project 2023 \say{Problemi variazionali/nonvariazionali: interazioni tra metodi integrali e principi del massimo}.
The authors would like to thank Ovidiu Savin and Daniela De Silva for fruitful
conversations on the topic of this paper. 
D.G. and D.J. wish to thank the Department of Mathematics of Columbia University for the warm hospitality.
Finally, the authors wish to thank the anonymous referee for the valuable suggestions, which improved the presentation of the manuscript.

\section*{Competing interests to declare}
There are no relevant financial or non-financial competing interests to report.

\bibliographystyle{abbrv}
\bibliography{BibStefan.bib}
\Addresses
\end{document}